\documentclass[11pt]{amsart}

\usepackage{amsmath,amssymb,amsthm,mathtools}
\usepackage{booktabs}
\usepackage{enumitem}
\usepackage[hidelinks]{hyperref}
\usepackage{url}
\usepackage[margin=1.05in]{geometry}

\newtheorem{theorem}{Theorem}[section]
\newtheorem{proposition}[theorem]{Proposition}
\newtheorem{lemma}[theorem]{Lemma}

\theoremstyle{remark}
\newtheorem{remark}[theorem]{Remark}

\newcommand{\F}{\mathbb F}
\newcommand{\Aut}{\operatorname{Aut}}
\newcommand{\Syl}{\operatorname{Syl}}
\newcommand{\PP}{\mathbb P}
\newcommand{\cH}{\mathcal H}

\title{Transitive automorphism groups of maximal curves}

\author{Saeed Tafazolian}
\address{Institute of Mathematics, Statistics and Scientific Computing, University of Campinas, Brazil}
\email{saeed@unicamp.br}

\subjclass[2020]{11G20, 14G15, 14H37}
\keywords{maximal curve, Hermitian curve, automorphism group, Klein quartic, transitive action}

\begin{document}

\begin{abstract}
Let $q=p^h$ be an odd prime power and let $X/\F_{q^2}$ be a maximal
curve of genus at least two. We classify the curves for which the full
geometric automorphism group is transitive on the set of
$\F_{q^2}$-rational points. We prove that, if $h>1$, then $X$ is the
Hermitian curve. If $h=1$, the only additional possibility occurs for
$q=5$: the unique $\F_{25}$-maximal genus-three curve, namely the
maximal $S_4$-model of the Klein quartic. The proof separates tame and
wild actions. In the tame case, genus bounds, signatures of quotient
maps, specialization to characteristic zero, low-genus automorphism
classifications, and a Hasse--Witt computation reduce the problem to
the Klein quartic. In the wild case, the rational points are identified
with the Sylow $p$-subgroups of the automorphism group. Noncyclic Sylow
subgroups are treated using a finite-group classification theorem
together with Henn's large-automorphism classification, while cyclic
Sylow subgroups are excluded by local ramification and the
Riemann--Hurwitz formula. Combined with the known characteristic-two
result, this yields the classification for all prime powers.
\end{abstract}

\maketitle

\section{Introduction}

Let $q=p^h$ be a power of an odd prime and let $X/\F_{q^2}$ be a
projective, nonsingular, geometrically irreducible curve of genus
$g\ge2$.  Recall that $X$ is maximal if
\[
        \#X(\F_{q^2})=q^2+1+2gq.
\]
We are interested in the case in which the full geometric automorphism
group of $X$ is transitive on $X(\F_{q^2})$.

The basic example is the Hermitian curve
\[
        \cH_{q+1}:\qquad y^{q+1}=x^q+x.
\]
It has genus $q(q-1)/2$ and $q^3+1$ rational points, and
$\Aut(\cH_{q+1})\cong PGU(3,q)$ acts doubly transitively on
$\cH_{q+1}(\F_{q^2})$; see \cite[Theorem~12.24(iv)]{HKT}.  In odd
characteristic there is one further example: the Klein quartic has an
$\F_{25}$-maximal model of genus $3$ with $56$ rational points, and its
full automorphism group of order $168$ is transitive on these points.

The transitivity problem was considered by Giulietti, Korchm\'aros and
Montanucci in \cite[Section~6.1]{GKMSurvey}.  Among the standard
families of maximal curves, the Hermitian curve is distinguished by the
fact that its full automorphism group is transitive on all rational
points.  In characteristic $2$, Giulietti and Korchm\'aros proved that
this property characterizes the Hermitian curve among maximal curves of
genus greater than one; see \cite[Theorem~9.1]{GK2010}.  The purpose of
this paper is to settle the odd-characteristic case.

Our main result is the following.

\begin{theorem}\label{thm:main}
Let $q=p^h$ be an odd prime power and let $X/\F_{q^2}$ be a maximal
curve of genus $g\ge2$.  Then $\Aut(X)$ acts transitively on
$X(\F_{q^2})$ if and only if one of the following holds:
\begin{enumerate}[label=\textup{(\roman*)}]
\item $X$ is $\F_{q^2}$-isomorphic to the Hermitian curve
\[
             \cH_{q+1}:\qquad y^{q+1}=x^q+x;
\]
\item $q=5$ and $g=3$. Equivalently, $X$ is the unique
$\F_{25}$-maximal curve of genus $3$, represented by the classical
``$S_4$-model'' of the Klein quartic introduced by Elkies
\cite[(1.11)]{ElkiesKlein}; see also \cite[\S3.1]{BTT} for its
$\F_{25}$-maximal realization.
\end{enumerate}
\end{theorem}

We divide the proof into the tame and wild cases.  In the tame case we
prove a slightly stronger statement, allowing an arbitrary subgroup of
order prime to $p$ which is transitive on $X(\F_{q^2})$.  A numerical
reduction first gives $q=p\le19$ for a non-Hermitian curve.  The upper
part of the genus spectrum then leaves seven possibilities.  They are
excluded by tame specialization, low-genus automorphism bounds,
triangle-group arguments and a Hasse--Witt computation, with the
single exception of the maximal Klein quartic over $\F_{25}$.

In the wild case, maximality implies that $X$ has $p$-rank zero.  The
rational points are then naturally in bijection with the Sylow
$p$-subgroups of the full automorphism group; in particular,
\[
   |\Syl_p(\Aut(X))|=q^2+1+2gq.
\]
For a noncyclic Sylow $p$-subgroup we use the group-theoretic theorem of
Guralnick--Malmskog--Pries together with Henn's classification.  The
former uses the classification of finite simple groups; this is the
only place in the proof where we invoke CFSG directly.  The cyclic case
is treated separately by combining the global Riemann--Hurwitz formula
with the local ramification filtration.

The hypothesis of Theorem~\ref{thm:main} is different from the usual
``many automorphisms'' assumptions.  For $q=p$, \cite{BMT} obtains
Hermitian coverings under a lower bound on the size of the automorphism
group, while \cite{GK2019} studies large automorphism groups in relation
to the $p$-rank.  Here no order threshold is imposed; the transitivity
assumption itself gives the orbit structure used in the proof.

Throughout the paper, $\Aut(X)$ denotes the geometric automorphism
group.  Every geometric automorphism of a maximal curve over
$\F_{q^2}$ is defined over $\F_{q^2}$; see \cite[Theorem~3.10]{GSY}.
By ``the Hermitian curve'' over $\F_{q^2}$ we mean the
$\F_{q^2}$-isomorphism class of $\cH_{q+1}$.  When a curve is only
geometrically isomorphic to $\cH_{r+1}$ for some power $r$ of $p$,
possibly with $r\ne q$, we shall say explicitly that it is
geometrically Hermitian.

\section{Preliminaries}

We collect the facts used in the proof.

\subsection{Genus bounds}

We shall repeatedly use the upper part of the genus spectrum of a
maximal curve.  The maximal-genus characterization in part \textup{(i)}
is due to R\"uck--Stichtenoth, while the second-genus bound in part
\textup{(ii)} is due to Fuhrmann--Garcia--Torres; we use the formulations
recalled in \cite[Lemma~2.12 and Corollary~2.13]{KT}.  The equality case
in part \textup{(iii)} is \cite[Theorem~3.1]{FGT}, and the third genus
bound in part \textup{(iv)} is \cite[Corollary~3.3]{KT}.

\begin{lemma}\label{lem:gaps}
Let $X/\F_{q^2}$ be a maximal curve of genus $g$.
Then:
\begin{enumerate}[label=\textup{(\roman*)}]
\item
$g\le q(q-1)/2$, with equality if and only if $X$ is Hermitian.

\item If $X$ is not Hermitian, then
\[
g\le \frac{(q-1)^2}{4}.
\]

\item If $q$ is odd and $g=(q-1)^2/4$, then
\[
X\simeq_{\F_{q^2}}\{\,y^{(q+1)/2}=x^q+x\,\}.
\]

\item If
\[
g<\left\lfloor\frac{(q-1)^2}{4}\right\rfloor,
\]
then
\[
g\le\left\lfloor\frac{q^2-q+4}{6}\right\rfloor.
\]
\end{enumerate}
\end{lemma}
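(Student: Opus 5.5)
This lemma collects known results, so the plan is to verify that each part follows from the cited sources rather than to re-derive the theory. The common tool is the Stöhr--Voloch / Frobenius linear series approach to maximal curves. For an $\F_{q^2}$-maximal curve one has the fundamental linear equivalence
\[
qP_0+\Phi(P)\sim (q+1)P_0 \quad\text{for all } P\in X,
\]
where $\Phi$ is the $q^2$-Frobenius and $P_0$ is any rational point. Hence the divisor $D=(q+1)P_0$ defines a complete, base-point-free linear series $|D|$ of some dimension $r\ge1$. All four parts come from bounding $g$ in terms of $r$ and analysing the extremal values of $r$.

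For (i) I would use Castelnuovo's genus bound for the morphism $X\to\PP^r$ given by $|D|$, of degree $q+1$. For $r=2$ this bound is the plane-curve bound $g\le q(q-1)/2$. The case $r=1$ is impossible for $g\ge2$, because then $q+1$ would be a gap-free value forcing $X$ hyperelliptic-like with too few points. For $r\ge3$ the bound is strictly smaller. Equality forces $r=2$ and $X$ to be a smooth plane curve of degree $q+1$ whose Frobenius-nonclassical behaviour identifies it with the Hermitian curve (Rück--Stichtenoth).

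For (ii) and (iii), assume $X$ is not Hermitian, so $r\ge3$. Castelnuovo's bound for degree $q+1$ in $\PP^3$ gives
\[
g\le \left\lfloor\frac{q^2-2q+1}{4}\right\rfloor=\frac{(q-1)^2}{4}
\]
when $q$ is odd; the case $r\ge4$ gives something smaller still. This is the Fuhrmann--Garcia--Torres argument.

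In the equality case, Castelnuovo's bound is attained, so the image lies on a quadric. Rationality then forces it to be a quadric cone in which $X$ is a double cover of a suitable curve. The Frobenius relation then pins down the equation $y^{(q+1)/2}=x^q+x$ up to $\F_{q^2}$-isomorphism, as in \cite[Theorem~3.1]{FGT}.

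For (iv), if $g$ is below $\lfloor (q-1)^2/4\rfloor$, one shows the image no longer lies on a quadric of the extremal type. The refined Castelnuovo bound for curves not on quadrics, of degree $q+1$ in $\PP^3$, gives roughly $(q^2-q+4)/6$. This is \cite[Corollary~3.3]{KT}.

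The main obstacle is the equality characterizations, namely identifying the extremal curve with an explicit model over $\F_{q^2}$ rather than just geometrically. In the present paper I would not redo this. I would simply invoke the cited statements \cite[Lemma~2.12, Corollary~2.13, Corollary~3.3]{KT} and \cite[Theorem~3.1]{FGT}, checking only that their hypotheses (odd $q$ for (iii), and non-Hermitian for (ii)) match those in the statement.
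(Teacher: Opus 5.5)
Your proposal matches the paper, which likewise gives no independent proof and simply assembles the lemma from the literature: (i) and (ii) come from R\"uck--Stichtenoth and Fuhrmann--Garcia--Torres in the formulations of \cite[Lemma~2.12 and Corollary~2.13]{KT}, (iii) from \cite[Theorem~3.1]{FGT}, and (iv) from \cite[Corollary~3.3]{KT}. Your Castelnuovo--Halphen sketch is only background, but the fundamental linear equivalence in it should read $qP+\Phi(P)\sim(q+1)P_0$ (as written, $qP_0+\Phi(P)\sim(q+1)P_0$ would force $\Phi(P)\sim P_0$ for every $P$), and the Halphen-type bound in (iv) needs the image to lie on no quadric surface at all, not merely on none ``of the extremal type''.
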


A maximal curve is supersingular.  We shall also use the following
standard consequence of the Kleiman--Serre covering result; see
\cite[Proposition~6]{Lachaud}.

\begin{lemma}\label{lem:quotientmax}
Let $X/\F_{q^2}$ be maximal, and let $H\leq\Aut(X)$ be defined over
$\F_{q^2}$.  Then the quotient $X/H$ is maximal over $\F_{q^2}$.
In particular, every positive-genus quotient of $X$ is supersingular.
\end{lemma}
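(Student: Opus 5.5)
The statement to prove is Lemma~\ref{lem:quotientmax}. I would deduce it from the characterization of maximality via the Frobenius eigenvalues, i.e.\ via the $L$-polynomial, together with the Kleiman--Serre divisibility of $L$-polynomials under coverings.

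First, I need the quotient to be a curve over $\F_{q^2}$. Since $H$ is defined over $\F_{q^2}$, the quotient $Y=X/H$ is a nonsingular projective curve defined over $\F_{q^2}$. The natural map $\pi\colon X\to Y$ is then a surjective morphism defined over $\F_{q^2}$.

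Second, I would use the standard fact that $\pi^*$ makes $\mathrm{Jac}(Y)$ isogenous to an abelian subvariety of $\mathrm{Jac}(X)$. Equivalently, by the Kleiman--Serre argument, $\pi^*$ embeds $H^1_{\text{ét}}(Y_{\overline{\F}},\mathbb Q_\ell)$ Frobenius-equivariantly into $H^1_{\text{ét}}(X_{\overline{\F}},\mathbb Q_\ell)$. The embedding is injective because $\pi_*\pi^*=\deg\pi$. Consequently the $L$-polynomial $L_Y(t)$ divides $L_X(t)$.

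Third, I would bring in maximality. $X$ is maximal over $\F_{q^2}$ if and only if $L_X(t)=(1+qt)^{2g}$, i.e.\ all Frobenius eigenvalues equal $-q$. By the divisibility, every eigenvalue of Frobenius on $H^1$ of $Y$ is also $-q$, so $L_Y(t)=(1+qt)^{2g(Y)}$. Therefore
\[
\#Y(\F_{q^2})=q^2+1+2g(Y)q,
\]
which says exactly that $Y$ is maximal.

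Finally, for the supersingularity claim: if $g(Y)>0$, then $L_Y(t)=(1+qt)^{2g(Y)}$ shows that all normalized eigenvalues are roots of unity, and hence $Y$ is supersingular. This is the sense in which a maximal curve is supersingular, as noted just before the lemma.

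No step is really an obstacle. The only point requiring care is the divisibility $L_Y\mid L_X$, which I would simply cite from \cite[Proposition~6]{Lachaud}, and the requirement that the quotient map be defined over $\F_{q^2}$.
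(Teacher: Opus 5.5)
Your proposal is correct and follows essentially the paper's route. The paper gives no argument beyond citing the Kleiman--Serre divisibility $L_{X/H}\mid L_X$ via \cite[Proposition~6]{Lachaud}; you make explicit why this divisibility forces every Frobenius eigenvalue of the quotient to equal $-q$, which yields both maximality and supersingularity.
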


\subsection{Group actions and ramification notation}

Let a finite group $G$ act on a curve $X$.  For $P\in X$, we write
\[
        G_P=\{\sigma\in G:\sigma(P)=P\}
\]
for the stabilizer of $P$, and
\[
        G\cdot P=\{\sigma(P):\sigma\in G\}
\]
for its orbit.  The orbit--stabilizer formula gives
\[
        |G|=|G_P|\,|G\cdot P|.
\]
An orbit is called short if its length is smaller than $|G|$, or
equivalently if its point stabilizers are nontrivial.

The action determines the quotient Galois cover
\[
        X\longrightarrow X/G.
\]
For $P\in X$, the ramification index at $P$ is $|G_P|$.  We write
\[
        G_P=G_P^{(0)}\supseteq G_P^{(1)}
             \supseteq G_P^{(2)}\supseteq\cdots
\]
for the lower ramification groups.  If the action is tame, then
$G_P^{(1)}=1$ and $G_P$ is cyclic.  In Section~\ref{sec:tame} we shall
call a subgroup $G\le\Aut(X)$ \emph{tame} when $p\nmid|G|$.  This is
the global hypothesis used there; in particular every inertia group of
the action is tame.

We write $\Syl_p(G)$ for the set of Sylow $p$-subgroups of $G$,
$N_G(S)$ for the normalizer of a subgroup $S\le G$, and
$O_{p'}(G)$ for the largest normal subgroup of $G$ of order prime to
$p$.  A Sylow $p$-subgroup is called TI if distinct conjugates have
trivial intersection.

We shall use the following structural result in the wild case. It is
quoted here in the geometric form needed below, in order to make clear
both its hypotheses and its $2$-transitivity conclusion.

\begin{theorem}[Guralnick--Malmskog--Pries in the form used here]\label{thm:GMP-TI}
Let $A$ act faithfully on a curve $Y$ of genus at least two over an
algebraically closed field of characteristic $p$, and let
$Q\in\Syl_p(A)$. Suppose that $Q$ is not cyclic (and, when $p=2$, not
generalized quaternion), that $Q$ fixes exactly one point of $Y$ and
acts freely on its complement, and that
\[
        N_A(Q)=QC
\]
with $C$ cyclic. If $N_A(Q)\ne A$ and $M=Z(N_A(Q))$, then $M$ is a
normal $p'$-subgroup of $A$, the quotient $A/M$ is almost simple, and
its socle is isomorphic to one of
\[
 PSL(2,p^a)\quad(a\ge2),\qquad
 PSU(3,p^a)\quad(p^a>2),
\]
\[
 Sz(2^{2a+1})\quad(p=2,\ a>1),\qquad
 {}^2G_2(3^{2a+1})'\quad(p=3).
\]
Moreover, $A$ acts doubly transitively on $\Syl_p(A)$.
\end{theorem}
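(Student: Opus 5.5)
This is a quoted result, so I will not reprove the group-theoretic classification. Instead I will check that the geometric hypotheses in Theorem~\ref{thm:GMP-TI} supply the purely group-theoretic hypotheses of the Guralnick--Malmskog--Pries theorem, and then read off the conclusions.

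\emph{Step 1: $Q$ is TI and self-normalizing on points.} Let $P$ be the unique fixed point of $Q$. Since $Q$ acts freely on $Y\setminus\{P\}$, every nontrivial element of $Q$ fixes only $P$. Suppose $1\ne x\in Q\cap Q^g$. Then $x$ fixes the unique fixed point $g(P)$ of $Q^g$, so $g(P)=P$. Hence $Q^g$ lies in $A_P$, and $Q$ is a Sylow $p$-subgroup of $A_P$.

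We also need the standard fact that the wild part $A_P^{(1)}$ is the unique normal Sylow $p$-subgroup of $A_P$. It follows that $A_P^{(1)}=Q$, so $Q^g=Q$ and $Q$ is TI. The same argument identifies $A_P$ with $N_A(Q)$: $N_A(Q)$ fixes $P$, and $A_P$ normalizes $Q$. Thus $N_A(Q)=A_P=Q\rtimes C$ with $C$ cyclic, as in the tame quotient $A_P/A_P^{(1)}$.

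\emph{Step 2: Sylow subgroups correspond to orbit points.} The map $Q^g\mapsto g(P)$ is a well-defined $A$-equivariant bijection from $\Syl_p(A)$ onto the orbit $A\cdot P$. This turns the permutation action on Sylow subgroups into the action on the orbit of $P$.

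\emph{Step 3: invoke GMP.} With $Q$ noncyclic, not generalized quaternion, TI, and $N_A(Q)=QC$ with $C$ cyclic and $N_A(Q)\neq A$, the Guralnick--Malmskog--Pries classification applies. It shows that $M=Z(N_A(Q))$, which equals $O_{p'}(A)$ in their setting, is normal. It also shows that $A/M$ is almost simple with socle of Lie type of rank one in characteristic $p$, from the list given.

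\emph{Step 4: double transitivity.} Each of the listed socles acts 2-transitively on its Sylow $p$-subgroups, equivalently on the points of its natural rank-one geometry. This holds for the projective line, the Hermitian unital, the Suzuki ovoid and the Ree unital. Since $M$ is a $p'$-group, it lies in every $N_A(Q^g)$; this follows from $[M,Q^g]\le M\cap Q^g=1$. So $M$ acts trivially on $\Syl_p(A)$, and the 2-transitivity of $A/M$ lifts to $A$.

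\emph{Main obstacle.} The delicate point is matching hypotheses exactly. We must verify that the GMP theorem's assumption on $N_A(Q)$ is met by $QC$ with $C$ cyclic. We must also identify $M$ with $O_{p'}(A)$ by checking that $M$ acts trivially on $\Syl_p(A)$. I would cite their main theorem for both points.
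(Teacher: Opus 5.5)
Your route is the paper's: Step~1 is the same TI verification (a nontrivial element of $Q\cap Q^g$ forces $g(P)=P$, and uniqueness of the Sylow $p$-subgroup $A_P^{(1)}$ of $A_P$ gives $Q^g=Q$), and everything else is taken from Guralnick--Malmskog--Pries. The difference is Step~4. The paper simply cites \cite[Theorem~3.16]{GMP}, which already gives the doubly transitive action on $\Syl_p(A)$. You instead derive it from the socle list, and as written that derivation does not work.

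The justification ``$[M,Q^g]\le M\cap Q^g=1$'' is circular. The inclusion $[M,Q^g]\le Q^g$ holds only if $M$ already normalizes $Q^g$, which is what you want to prove. A normal $p'$-subgroup need not normalize any Sylow $p$-subgroup: $V_4\trianglelefteq S_4$ with $p=3$ is an example. The conclusion is still true here, for a different reason. $M=Z(N_A(Q))$ centralizes $Q$, and since $M\trianglelefteq A$ we have $M=M^g=Z(N_A(Q^g))$, so $M$ centralizes every $Q^g$. Hence $Q^gM=Q^g\times M$, and $Q\mapsto QM/M$ is an equivariant bijection $\Syl_p(A)\to\Syl_p(A/M)$.

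A second step is glossed. The socle $T$ being $2$-transitive on $\Syl_p(T)$ yields $2$-transitivity of $A/M$ on $\Syl_p(A/M)$ only after you identify the two sets via $R\mapsto R\cap T$. When $p$ divides $[A/M:T]$, for instance with field automorphisms of order $p$, a Sylow $p$-subgroup of $A/M$ is larger than its trace on $T$. Injectivity of $R\mapsto R\cap T$ then requires the image $\overline Q$ to be TI in $A/M$. This follows from $QM\cap Q^gM=(Q\cap Q^g)\times M$, by comparing $p$- and $p'$-parts of commuting products. With these two repairs Step~4 is valid; alternatively, drop it and cite the $2$-transitivity directly, as the paper does.

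Your remark that $M=O_{p'}(A)$ is identified ``by checking that $M$ acts trivially on $\Syl_p(A)$'' is also not a valid criterion. That identity is part of what \cite[Theorem~3.16]{GMP} proves.
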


\begin{proof}[Justification of the quoted form]
The normality of $M$ and the almost-simple conclusion are precisely the
geometric conclusion of \cite[Theorem~3.19]{GMP}; we record why the
$2$-transitivity assertion from \cite[Theorem~3.16]{GMP}, on which that
theorem is based, applies in the present setting.

First, $Q$ is a TI subgroup. Let $P$ be the unique fixed point of $Q$
and take $a\notin N_A(Q)$. If $Q$ and $Q^a$ fixed the same point, then
both would be Sylow $p$-subgroups of the corresponding point
stabilizer. The first ramification group is the unique Sylow
$p$-subgroup of a point stabilizer, so this would force $Q=Q^a$, a
contradiction. Thus the fixed points of $Q$ and $Q^a$ are distinct. A
nontrivial element of $Q\cap Q^a$ would fix both of them, contrary to
the assumed free action away from the unique fixed point of $Q$. Hence
$Q$ is TI.

There is no mismatch between the hypotheses used here and those in
\cite{GMP}. In Notation~3.18 of that paper the condition that $Q$ be
noncyclic (and, for $p=2$, not generalized quaternion) is explicitly
noted to be equivalent to the existence of an elementary abelian
subgroup of order $p^2$. Moreover, Theorem~3.19 assumes
$A\ne I=N_A(Q)$, which is exactly our hypothesis $N_A(Q)\ne A$.
The group-theoretic Theorem~3.16 is stated under the weaker condition
$Q\ne A$, and this is automatic here.

For completeness, we make explicit the initial reduction in the proof
of \cite[Theorem~3.16]{GMP}. Since $N_A(Q)\ne A$, choose
$a\notin N_A(Q)$. The normal $p$-subgroup $O_p(A)$ is contained in
every Sylow $p$-subgroup of $A$, and therefore
\[
        O_p(A)\le Q\cap Q^a=1.
\]
Thus $O_p(A)=1$. The argument of \cite[Theorem~3.16]{GMP} then passes
to $A/O_{p'}(A)$, applies the finite-simple-group classification quoted
there, and obtains the displayed socle list and the doubly transitive
action on the Sylow $p$-subgroups. In fact, that theorem proves the
slightly stronger identity $M=O_{p'}(A)$. The classification of finite
simple groups enters only through this step; see
\cite[Remark~3.17]{GMP}.
\end{proof}

\subsection{Tame specialization and triangle actions}\label{subsec:tame-specialization}

Suppose that $p\nmid|G|$ and that the quotient $X/G$ is rational with
exactly three branch points.  If the ramification indices are
$(a,b,c)$, with
\[
       2\le a\le b\le c,\qquad
       \frac1a+\frac1b+\frac1c<1,
\]
then Grothendieck's specialization theorem for the prime-to-$p$
fundamental group of the punctured line gives a generating triple of $G$
of orders $a,b,c$ whose product is $1$; see
\cite[Expos\'e~XIII, Corollaire~2.12]{SGA1}.  Equivalently, $G$ is a
finite quotient of the hyperbolic triangle group $\Delta(a,b,c)$.  By the
Riemann existence theorem this same generating triple gives a complex
$G$-cover of $\PP^1$ with the same signature, and hence, by
Riemann--Hurwitz, with the same genus.  This is the only specialization
consequence used below.

The tame Riemann--Hurwitz formula reads
\begin{equation}\label{eq:triangleRH}
  2g-2
   =|G|\left(1-\frac1a-\frac1b-\frac1c\right).
\end{equation}

\begin{remark}\label{rem:threebranch}
We shall repeatedly use the following elementary consequence of the tame
Riemann--Hurwitz formula.  Let $G\le\Aut(X)$ satisfy $p\nmid|G|$. If
\[
        \frac{|G|}{g-1}>12,
\]
then $X/G$ is rational and the cover $X\to X/G$ has exactly three branch
points.  Indeed, if $X/G$ has positive genus, Riemann--Hurwitz gives
$|G|/(g-1)\le4$.  If $X/G\simeq\PP^1$ has at least four branch points,
then $|G|/(g-1)\le12$; for four branch points the smallest positive
orbifold defect is $1/6$, attained by $(2,2,2,3)$, and for at least five
branch points the defect is at least $1/2$.
\end{remark}

All characteristic-zero actions used below have genus at most $48$.
We use Breuer's classification of finite groups of holomorphic
(equivalently, orientation-preserving) automorphisms of compact Riemann
surfaces \cite[Chapter~5 and the summary table on p.~91]{Breuer}.
We shall also use Burnside's normal $p$-complement theorem and his
$p^a q^b$ theorem in the forms stated in
\cite[Theorems~5.13 and~7.8]{Isaacs}.

\subsection{\texorpdfstring{The curves $\cH_m$}{The curves Hm}}

For later use in the prime-field case, let $m$ be a divisor of $p+1$ and put
\[
            \cH_m:\qquad y^m=x^p+x .
\]
Then
\begin{equation}\label{eq:Hmgenus}
            g(\cH_m)=\frac{(m-1)(p-1)}2.
\end{equation}
For $2\le m<p+1$ one has
\begin{equation}\label{eq:Hmaut}
            |\Aut(\cH_m)|=m\,p(p^2-1);
\end{equation}
indeed $\Aut(\cH_m)$ contains a normal cyclic group of order $m$ with
quotient $PGL(2,p)$; see \cite[Lemma~2.3]{BMT}.

\section{The tame case}\label{sec:tame}

Throughout this section $G\le\Aut(X)$ is a subgroup of order prime to
$p$ which acts transitively on $X(\F_{q^2})$.  We first isolate the
numerical range in which such an action can occur.  This avoids repeating
the same Hurwitz estimate in the prime and non-prime cases.

\subsection{The numerical reduction}

\begin{lemma}\label{lem:tamereduction}
Let $q=p^h$ be odd and let $X/\F_{q^2}$ be a non-Hermitian maximal
curve of genus $g\ge2$. Suppose that a tame subgroup
$G\le\Aut(X)$ acts transitively on $X(\F_{q^2})$. Then
\[
q=p\le19.
\]
\end{lemma}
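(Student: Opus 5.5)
Since $G$ is transitive on $X(\F_{q^2})$, every orbit count divides $|G|$, so
\[
N:=\#X(\F_{q^2})=q^2+1+2gq\quad\text{divides}\quad |G|.
\]
For a tame group we have the Hurwitz-type bound $|G|\le 84(g-1)$ coming from the tame Riemann--Hurwitz formula. The plan is to play the lower bound $|G|\ge N$ against this upper bound.

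\textbf{Step 1: tame upper bound.} This is the standard Hurwitz argument; Remark~\ref{rem:threebranch} already handles the cases where $X/G$ has positive genus or at least four branch points. Combining it with transitivity gives
\[
q^2+1+2gq\le |G|\le 84(g-1).
\]
In particular $2gq<84g$, so $q<42$ and hence $q\le 41$.

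\textbf{Step 2: use the genus gap.} Lemma~\ref{lem:gaps}(ii) gives $g\le (q-1)^2/4$ for non-Hermitian $X$. Write $|G|=kN$ with $k\ge1$, so $kN\le 84(g-1)$. This yields
\[
q^2+2gq<84g,\qquad\text{i.e.}\qquad q^2<g(84-2q).
\]
Substituting $g\le (q-1)^2/4$ gives $4q^2<(q-1)^2(84-2q)$. This inequality already fails for large $q$ well below $42$. Solving $4q^2=(q-1)^2(84-2q)$ near $q\approx 39$ gives roughly $q\le 39$, which is not yet sharp enough. To sharpen it, I would refine Step 1 by cases on the signature:
\begin{itemize}[label=--]
\item If the signature is not $(2,3,7)$, then $|G|\le 48(g-1)$, giving $q<24$. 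The next signatures give $(2,3,8)\colon 48$ and $(2,4,5)\colon 40$.
\item If the signature is $(2,3,7)$, then $|G|=84(g-1)$ exactly, so $N$ divides $84(g-1)$ with quotient $k<42/q$. I would then analyze the divisibility $N\mid 84(g-1)$ using $\gcd(N,g-1)$. We have $N\equiv q^2+1+2q=(q+1)^2\pmod{g-1}$, so $N$ divides $84\,(g-1)$ forces $N\le 84\gcd(N,g-1)\le 84(q+1)^2$. This is weak, so I would instead handle the $(2,3,7)$ case with the finitely many values $q\le 41$ directly, checking $k\,N=84(g-1)$ for integers $g\le (q-1)^2/4$ and $k\ge1$. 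Each such equation reads $k(q^2+1)+84=g(84-2kq)$, which is a finite check.
\end{itemize}
Also, $q$ odd and $p\nmid|G|$ means that both $q\mid$ and $p\mid$ fail to divide $|G|$, while $N$ is coprime to $p$, so there is no contradiction from that direction.

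\textbf{Step 3: exclude $h>1$.} For $q=p^h$ with $h\ge 2$ and $q\le 41$, the candidates are $q=9,25,27$. Here $N\mid |G|$ with $p\nmid |G|$ is automatic, so I would exclude these via the finite Diophantine check above. Two further tools are available:
\begin{itemize}[label=--]
\item Lemma~\ref{lem:gaps}(iii)--(iv) restricts $g$ to at most about $(q^2-q+4)/6$ unless $g=(q-1)^2/4$.
\item In the extremal case $g=(q-1)^2/4$, the curve is explicitly $y^{(q+1)/2}=x^q+x$. Its automorphism group contains $p$-elements fixing the point at infinity, so that point is in a distinct orbit from the affine rational points under the full group. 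The tame $G$, however, must still be transitive, and the divisibility $N\mid 84(g-1)$ can be tested directly.
\end{itemize}
Using $g\le (q^2-q+4)/6$ in $q^2<g(84-2q)$ gives $6q^2<(q^2-q+4)(84-2q)$, which for $q=25$ reads $3750<604\cdot34=20536$. So the bound alone does not suffice for $q=25$ or $27$, and I would need the exact divisibility $k(q^2+1)+c_{\mathrm{sig}}=g(c_{\mathrm{sig}}-2kq)$ for each hyperbolic signature constant $c_{\mathrm{sig}}=|G|/(g-1)$, over the finitely many signatures with $c_{\mathrm{sig}}>2q$.

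\textbf{Main obstacle.} I expect the main obstacle to be precisely this finite case analysis. The crude estimates only give $q<42$, not $q\le 19$. Getting down to $q=p\le19$ requires combining, for each admissible triangle signature with $|G|/(g-1)>2q$, the integrality condition that $N$ divides $|G|$ with the genus constraints from Lemma~\ref{lem:gaps}. The same combination must also eliminate the prime powers $9,25,27$ and the primes $23,\dots,41$.
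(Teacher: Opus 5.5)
Your overall route is the same as the paper's: $|G|=kN$ from orbit--stabilizer, the tame Hurwitz bound, the non-Hermitian bound $g\le(q-1)^2/4$, the refinement that any signature other than $(2,3,7)$ forces $|G|\le48(g-1)$, and then integrality. It does dispose of $q\ge23$. For $q\ge24$ only $k=1$ with $(2,3,7)$ survives, and this gives the non-integers $355/17$, $407/15$, $463/13$, $523/11$, $727/5$, while $q=41$ gives $g=883>400$. For $q=23$, the bound $N\le48(g-1)$ would require $g\ge289>121$. The paper packages all of this more efficiently with the single monotonicity bound $N/(g-1)\ge 2q+4+16/(q-3)$.

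The gap is at $q=9$. There the check you describe has a survivor. It uses only integrality of $k(q^2+1)+c_{\mathrm{sig}}=g(c_{\mathrm{sig}}-2kq)$ together with Lemma~\ref{lem:gaps}. Take $k=3$, signature $(2,3,7)$ and $g=11$. Then $N=280$ and $3\cdot280=840=84\cdot10$. Moreover $g=11\le\lfloor 76/6\rfloor=12$ is permitted by Lemma~\ref{lem:gaps}(iv), and even $k=3\in\{2,3,7\}$, so the short-orbit condition does not help.

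Your remark that $p\nmid|G|$ gives ``no contradiction from that direction'' is exactly what fails. Tameness gives more than the Hurwitz bound. Both $k=|G_P|$ and the branch indices $a,b,c$ are orders of subgroups of $G$, hence prime to $p$. For $p=3$ this kills both $k=3$ and the signature $(2,3,7)$; equivalently, $3\mid840$.

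The paper uses precisely this input at $q=9$. Since $3\nmid|G|$ and $N/(g-1)\ge74/3$, one gets $s\in\{1,2\}$. For $s=1$, the condition $3\nmid abc$ forces the signature $(2,4,5)$ and the non-integral value $g=61/11$. For $s=2$, the ratio exceeds $48$, leaving only the non-tame signature $(2,3,7)$. Once you add this use of tameness, your plan closes; without it, $q=9$ is not excluded.
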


\begin{proof}
Put
\[
N=\#X(\F_{q^2})=q^2+1+2gq,
\qquad
s=|G_P|,
\quad P\in X(\F_{q^2}).
\]
By transitivity,
\[
|G|=sN.
\]
Since $p\nmid|G|$, the usual Hurwitz bound applies:
\begin{equation}\label{eq:tamehurwitz-general}
sN\le84(g-1);
\end{equation}
see \cite[Theorem~11.56]{HKT}.

Since $X$ is not Hermitian, Lemma~\ref{lem:gaps}(ii) gives
\[
g\le\frac{(q-1)^2}{4}.
\]
If $q=3$, then $g\le1$, a contradiction. Thus we may assume
$q\ge5$.

For fixed $q$, the function
\[
t\longmapsto
\frac{q^2+1+2qt}{t-1}
=
2q+\frac{(q+1)^2}{t-1},
\qquad t>1,
\]
is decreasing. Hence
\begin{equation}\label{eq:ratio-general}
\frac{N}{g-1}
\ge
2q+4+\frac{16}{q-3}.
\end{equation}
For $q\ge41$ the right-hand side is greater than $84$, contradicting
\eqref{eq:tamehurwitz-general}.

Suppose $23\le q\le37$. Then $N/(g-1)>48$, so
\eqref{eq:tamehurwitz-general} gives $s=1$. By
Remark~\ref{rem:threebranch}, $X/G$ is rational with three branch
points. If $(a,b,c)$ is the signature, with
$2\le a\le b\le c$, then
\[
0<
1-\frac1a-\frac1b-\frac1c
=
\frac{2(g-1)}{|G|}
<
\frac1{24}.
\]
The only hyperbolic triple satisfying this inequality is $(2,3,7)$.
Here the strict inequality is essential: $(2,3,8)$ has defect exactly
$1/24$ and therefore does not occur. Thus
\[
|G|=84(g-1).
\]
For $q=27$ this signature is not tame in characteristic $3$.
For
\[
q\in\{23,25,29,31,37\},
\]
the equality $|G|=N$ gives
\[
g=\frac{q^2+85}{84-2q}.
\]
The corresponding values are
\[
\frac{307}{19},\quad
\frac{355}{17},\quad
\frac{463}{13},\quad
\frac{523}{11},\quad
\frac{727}{5},
\]
respectively, none of which is an integer. Hence no value
$23\le q\le37$ can occur.

The only odd non-prime prime power below $23$ is $q=9$. In this case
\[
\frac{N}{g-1}\ge\frac{74}{3},
\]
so \eqref{eq:tamehurwitz-general} gives $s\le3$. Since
$3\nmid|G|$, one has $s\in\{1,2\}$.

Suppose first that $s=1$. Then
\[
\frac{|G|}{g-1}
=
\frac{N}{g-1}
\ge
\frac{74}{3},
\]
and hence
\[
0<
D:=\frac{2(g-1)}{|G|}
\le
\frac{3}{37}
<
\frac1{12}.
\]
By Remark~\ref{rem:threebranch}, $X/G$ is rational with three branch
points. Let their ramification indices be
\[
2\le a\le b\le c.
\]
Since the action is tame in characteristic $3$, one has
$3\nmid abc$, and
\[
D=
1-\frac1a-\frac1b-\frac1c.
\]
Thus
\[
\frac1a+\frac1b+\frac1c
\ge
\frac{34}{37}.
\]
If $a\ge4$, then the left-hand side is at most $3/4$, a
contradiction. Hence $a\le3$; since $3\nmid a$, we get $a=2$. If $b\ge5$, then
\[
\frac1a+\frac1b+\frac1c
\le
\frac12+\frac25
=
\frac9{10}
<
\frac{34}{37},
\]
again a contradiction. Since $3\nmid b$, we have $b=2$ or $4$.
The case $b=2$ would give
\[
\frac1a+\frac1b+\frac1c>1,
\]
which is impossible. Hence $b=4$. Finally,
\[
\frac1c
\ge
\frac{34}{37}-\frac34
=
\frac{25}{148},
\]
so $c\le5$. Since $c\ge4$ and $3\nmid c$, we have
$c=4$ or $5$. The value $c=4$ gives $D=0$, impossible since
$g\ge2$. Therefore the signature is $(2,4,5)$.

Consequently
\[
|G|=40(g-1).
\]
Since $s=1$,
\[
40(g-1)=N=82+18g,
\]
which gives
\[
g=\frac{61}{11},
\]
a contradiction.

If $s=2$, then
\[
\frac{|G|}{g-1}
=
\frac{2N}{g-1}
\ge
\frac{148}{3}
>
48.
\]
As above, the only possible signature is $(2,3,7)$, which is not
tame in characteristic $3$. Thus $q=9$ is impossible.

Hence $q$ is prime and $q\le19$.
\end{proof}

\subsection{The prime-field candidates}

Assume from now on that $q=p\le19$. The case $p=3$ is already
impossible for a non-Hermitian curve of genus at least two, since
Lemma~\ref{lem:gaps}(ii) gives $g\le1$. Thus the preceding lemma leaves
only
\[
p\in\{5,7,11,13,17,19\}.
\]

\begin{lemma}\label{lem:tamecandidates}
Let $X/\F_{p^2}$ be a non-Hermitian maximal curve of genus $g\ge2$,
and suppose that a tame subgroup $G\le\Aut(X)$ acts transitively on
$X(\F_{p^2})$. Put
\[
N=p^2+1+2gp,\qquad
s=|G_P|.
\]
Then the possibilities are exactly the seven rows in
Table~\ref{tab:candidates}.
\end{lemma}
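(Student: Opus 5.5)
We run a finite case enumeration over $p\in\{5,7,11,13,17,19\}$, driven by the same two inputs as Lemma~\ref{lem:tamereduction}: the Hurwitz bound $sN\le 84(g-1)$ and the genus spectrum of Lemma~\ref{lem:gaps}. Since $X$ is not Hermitian, Lemma~\ref{lem:gaps}(ii) gives $g\le (p-1)^2/4$. The ratio
\[
\frac{N}{g-1}=2p+\frac{(p+1)^2}{g-1}
\]
is decreasing in $g$, so Hurwitz is most easily satisfied at large $g$. This forces $g$ close to the top of the spectrum, and $s$ to be small.

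The first step, for each $p$, is to list the admissible genera. These are $g=(p-1)^2/4$, which by Lemma~\ref{lem:gaps}(iii) is the curve $y^{(p+1)/2}=x^p+x$, together with all $g\le\lfloor (p^2-p+4)/6\rfloor$ by Lemma~\ref{lem:gaps}(iv). For each such $g$ we keep only the pairs $(g,s)$ with $sN\le 84(g-1)$ and $p\nmid s$.

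The second step is to refine using the signature. Whenever $|G|/(g-1)=sN/(g-1)>12$, Remark~\ref{rem:threebranch} applies: $X/G$ is rational with three branch points $(a,b,c)$, and \eqref{eq:triangleRH} gives
\[
sN\Bigl(1-\tfrac1a-\tfrac1b-\tfrac1c\Bigr)=2g-2 .
\]
This is a Diophantine condition. We solve it by bounding the defect $D=2(g-1)/(sN)$ from above, which restricts $(a,b,c)$ to a short list exactly as in the $q=9$ argument. We then impose $p\nmid abc$, require $g$ to be an integer, and require that $s$ divide one of $a,b,c$. The last condition holds because the rational points form a single $G$-orbit with cyclic stabilizer of order $s$, so that orbit lies over a branch point of index $s$, or over a non-branch point when $s=1$.

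Third, we check the small-ratio cases where $sN/(g-1)\le 12$. These can occur only for the smallest genera relative to $p$. Here the bound on $D$ is weaker, but $N$ is large compared with $g$, so $sN\le 12(g-1)$ can be tested directly; I expect it to fail in almost all cases.

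The main obstacle is the bookkeeping. For $p=17$ and $19$, Lemma~\ref{lem:gaps}(iv) allows many genera, so the enumeration must be organized carefully. I would first use $N/(g-1)\ge 2p+(p+1)^2/(g-1)$, together with $sN\le 84(g-1)$, to bound $g$ from below for each $p$; this shrinks the range substantially. Only then would I run the signature analysis. For each survivor I would record $p$, $g$, $N$, $s$, $|G|$ and $(a,b,c)$, and these rows should be exactly the seven entries of Table~\ref{tab:candidates}. I would cross-check that $p=5$, $g=3$ with $|G|=168$, $N=56$, $s=3$ and signature $(2,3,7)$ appears, since the Klein quartic is the one candidate that must survive.
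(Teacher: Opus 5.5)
Your plan uses the paper's ingredients: the Hurwitz bound, Remark~\ref{rem:threebranch}, the triangle relation with $p\nmid abc$, the branch-point condition on $s$, and the genus spectrum of Lemma~\ref{lem:gaps}. It differs only in running the finite search genus-first, whereas the paper fixes the signature first and reads off $g=(R+s(p^2+1))/(R-2sp)$ with $R=|G|/(g-1)$.

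Your third step is vacuous, not a few small-genus exceptions, and you have the monotonicity backwards there. Since $N/(g-1)=2p+(p+1)^2/(g-1)$ decreases in $g$ and $g\le(p-1)^2/4$, one gets $N/(g-1)\ge 2p+4+16/(p-3)\ge22$ for every admissible genus. This makes Remark~\ref{rem:threebranch} apply uniformly and restricts the signatures to the ten with defect at most $1/11$. Also, your own justification gives $s\in\{a,b,c\}$ for $s>1$, which is sharper than ``$s$ divides one of $a,b,c$''.
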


\begin{table}[ht]
\centering
\caption{Numerical possibilities in the tame prime-field case.}
\label{tab:candidates}
\begin{tabular}{ccccc}
\toprule
$p$ & $g$ & $N$ & $s$ & signature\\
\midrule
5  & 3  & 56   & 3 & $(2,3,7)$\\
7  & 5  & 120  & 1 & $(2,3,10)$\\
11 & 9  & 320  & 1 & $(2,4,5)$\\
11 & 19 & 540  & 1 & $(2,3,10)$\\
11 & 25 & 672  & 3 & $(2,3,7)$\\
13 & 15 & 560  & 1 & $(2,4,5)$\\
19 & 41 & 1920 & 1 & $(2,3,8)$\\
\bottomrule
\end{tabular}
\end{table}

\begin{proof}
Let
\[
R:=\frac{|G|}{g-1}=\frac{sN}{g-1}.
\]
From \eqref{eq:ratio-general} with $q=p$ we have
\[
\frac{N}{g-1}\ge 2p+4+\frac{16}{p-3},
\]
and hence $R\ge22$. By Remark~\ref{rem:threebranch}, $X/G$ is
rational and the cover has exactly three branch points. Let
$(a,b,c)$ be its signature, with $2\le a\le b\le c$. Then
\[
R=\frac{2}{1-\frac1a-\frac1b-\frac1c},
\]
so
\begin{equation}\label{eq:signature-bound}
0<1-\frac1a-\frac1b-\frac1c\le\frac1{11}.
\end{equation}
The elementary triangle calculation gives
\[
(2,3,c),\qquad 7\le c\le13,
\]
together with
\[
(2,4,5),\qquad (2,4,6),\qquad (3,3,4).
\]
For these signatures,
\[
\begin{array}{c|c}
\text{signature} & R\\ \hline
(2,3,c) & \dfrac{12c}{c-6}\\[2mm]
(2,4,5) & 40\\
(2,4,6) & 24\\
(3,3,4) & 24.
\end{array}
\]

Combining \eqref{eq:tamehurwitz-general} with
\eqref{eq:ratio-general} gives
\[
\begin{array}{c|cccccc}
p&5&7&11&13&17&19\\ \hline
s\le&3&3&3&2&2&1.
\end{array}
\]
Finally,
\[
R(g-1)=s(p^2+1+2pg)
\]
gives
\begin{equation}\label{eq:g-from-R}
g=\frac{R+s(p^2+1)}{R-2sp}.
\end{equation}

If $s>1$, then the orbit $X(\F_{p^2})$ is a short orbit of $G$.
Hence its image in $X/G$ is a branch point, and therefore
\[
s\in\{a,b,c\}.
\]

The remaining step is a direct substitution into the ten signatures
listed above and the indicated finite ranges for $s$; no further
group-theoretic input is used. For each such pair we impose the
necessary conditions:
\[
p\nmid abc,\qquad
R-2sp>0,\qquad
g=\frac{R+s(p^2+1)}{R-2sp}\in\mathbf Z,\quad g\ge2,
\]
together with $s\in\{a,b,c\}$ when $s>1$.

For reference, before applying the genus-spectrum restriction the
integral solutions are the following; an entry $(g,s;a,b,c)$ records the
genus, stabilizer order and signature:
\[
\begin{array}{c|l}
5  &(3,3;2,3,7),\ (7,3;2,3,8),\ (19,3;2,3,9)\\
   &(19,2;2,3,12),\ (19,2;2,4,6)\\[1mm]
7  &(5,1;2,3,10),\ (17,2;2,3,9),\ (33,3;2,3,8)\\
   &(65,2;2,3,10)\\[1mm]
11 &(9,1;2,4,5),\ (19,1;2,3,10),\ (25,3;2,3,7)\\
   &(73,2;2,3,8),\ (73,1;2,3,12),\ (73,1;2,4,6)\\
   &(73,1;3,3,4),\ (505,1;2,3,13)\\[1mm]
13 &(15,1;2,4,5),\ (50,1;2,3,10),\ (491,1;2,3,11)\\
17 &(55,1;2,4,5),\ (163,1;2,3,9)\\
19 &(41,1;2,3,8),\ (201,1;2,4,5).
\end{array}
\]

The upper genus spectrum gives one further essential restriction. Put
\[
g_1=\frac{(p-1)^2}{4},
\qquad
g_2=\left\lfloor\frac{p^2-p+4}{6}\right\rfloor.
\]
By Lemma~\ref{lem:gaps}, an admissible non-Hermitian genus satisfies
\[
g=g_1\qquad\text{or}\qquad g\le g_2.
\]
Substitution in \eqref{eq:g-from-R}, followed by the genus-spectrum
restriction, gives exactly the seven rows of Table~\ref{tab:candidates}.
For $p=5$ one has $g_1=g_2=4$, so the distinction between the two
alternatives in the genus-spectrum restriction is vacuous in that
case. Notice that the genus-spectrum condition is essential for the
larger primes; for example, the formal solution
$(p,g)=(17,55)$ is excluded since
\[
46<55<64,
\]
where $g_2=46$ and $g_1=64$. The row $(p,g)=(11,19)$ lies exactly on
the third genus bound $g_2=19$.
\end{proof}

One row is removed by the second-genus classification.

\begin{lemma}\label{lem:gapeliminate}
The row $(p,g)=(11,25)$ in Table~\ref{tab:candidates} does not occur.
\end{lemma}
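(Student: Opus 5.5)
The row $(p,g)=(11,25)$ sits exactly on the second genus bound, so I would eliminate it by identifying $X$ explicitly and then comparing group orders with Lagrange's theorem. For $p=11$ one has
\[
g_1=\frac{(p-1)^2}{4}=\frac{100}{4}=25 ,
\]
so the genus in this row equals $(q-1)^2/4$ with $q=p=11$ odd. Lemma~\ref{lem:gaps}(iii) then gives
\[
X\simeq_{\F_{121}}\{\,y^{6}=x^{11}+x\,\}=\cH_6 .
\]
As a consistency check, \eqref{eq:Hmgenus} with $m=6$ gives $g=5\cdot10/2=25$, and $6$ divides $p+1=12$. So this row is literally the curve $\cH_6$ over $\F_{121}$.

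Next I would compute the full automorphism group. Since $2\le 6<12=p+1$, \eqref{eq:Hmaut} applies and gives
\[
|\Aut(\cH_6)|=6\cdot 11\cdot(11^2-1)=7920=2^4\cdot3^2\cdot5\cdot11 .
\]
The row also records the order of $G$. There $s=3$ and $N=672$, so
\[
|G|=sN=2016=2^5\cdot3^2\cdot7 .
\]
Equivalently $|G|=84(g-1)$, since the signature is $(2,3,7)$.

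Since $G\le\Aut(X)\cong\Aut(\cH_6)$, Lagrange's theorem would force $2016\mid 7920$. This fails: $7\nmid 7920$, and $2^5\nmid 7920$. This contradiction excludes the row. The only genuine input is the identification in Lemma~\ref{lem:gaps}(iii) together with the order formula \eqref{eq:Hmaut} from \cite{BMT}, so I expect no real obstacle.
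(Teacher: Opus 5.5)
Your proposal is correct and follows the paper's proof: you identify $X\simeq\cH_6$ via Lemma~\ref{lem:gaps}(iii), compute $|\Aut(\cH_6)|=7920$ from \eqref{eq:Hmaut}, and derive a Lagrange contradiction with $|G|=2016$ because $7\nmid 7920$. Your extra observation that $2^5\nmid 7920$ is also correct, but it is not needed.
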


\begin{proof}
Here
\[
g=\frac{(p-1)^2}{4}.
\]
By Lemma~\ref{lem:gaps}(iii),
\[
X\simeq \cH_6:\quad y^6=x^{11}+x.
\]
By \eqref{eq:Hmaut},
\[
|\Aut(X)|=6\cdot11(11^2-1)=7920.
\]
Transitivity in this row would give
\[
|G|=3\cdot672=2016.
\]
But $7\mid2016$, whereas $7\nmid7920$, so
$|G|\nmid|\Aut(X)|$, a contradiction.
\end{proof}

The next three rows are excluded by a combination of the
characteristic-zero order bounds and a small group-theoretic argument.

\begin{lemma}\label{lem:triangleexclude}
The rows
\[
(p,g)=(11,19),\qquad (13,15),\qquad (19,41)
\]
do not occur.
\end{lemma}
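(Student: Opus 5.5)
The plan is to move each of the three rows to characteristic zero and then rule out the resulting complex action, either from Breuer's tables or by a direct group-theoretic argument. In each row $s=1$, so $|G|=N$ and the numerical data are:
\begin{itemize}
\item $(11,19)$: $|G|=540=2^2\cdot3^3\cdot5$, signature $(2,3,10)$;
\item $(13,15)$: $|G|=560=2^4\cdot5\cdot7$, signature $(2,4,5)$;
\item $(19,41)$: $|G|=1920=2^7\cdot3\cdot5$, signature $(2,3,8)$.
\end{itemize}
In every case $p\nmid|G|$, and the signature is hyperbolic with three branch points on a rational quotient (Remark~\ref{rem:threebranch}). The specialization argument of Subsection~\ref{subsec:tame-specialization} then gives a generating triple of $G$ of orders $(a,b,c)$ with product $1$. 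By Riemann existence this yields a complex $G$-action on a compact Riemann surface of the same genus ($19$, $15$, $41$ respectively, all at most $48$) with the same signature. The problem is thereby purely a question about finite quotients of triangle groups.

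The main step is to consult Breuer's classification \cite[Chapter~5 and p.~91]{Breuer} for genera $15$, $19$ and $41$. I would check that no group of order $560$ with signature $(2,4,5)$ occurs in genus $15$, none of order $540$ with signature $(2,3,10)$ in genus $19$, and none of order $1920$ with signature $(2,3,8)$ in genus $41$. The point is that these orders are well above the largest orders recorded there for these genera. I expect this table lookup to be the main obstacle, since it must be done carefully and one must make sure the tables list full signatures and not just maximal groups. For that reason I would back it up with a self-contained group-theoretic argument wherever possible.

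For the group-theoretic backup I would first compute abelianizations.
\begin{itemize}
\item $\Delta(2,4,5)$: from $2x=4y=5(x+y)=0$ one gets $x+y=0$, so $G^{ab}$ has order at most $2$.
\item $\Delta(2,3,8)$: the abelianization is similarly tiny, of order at most $2$.
\item $\Delta(2,3,10)$: the abelianization has order at most $6$.
\end{itemize}

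For order $560$, Sylow theory gives $n_7\in\{1,8\}$. I would then work with $O_{7}(G)$ or the normal $7$-complement: use Burnside's normal $p$-complement theorem \cite[Theorem~5.13]{Isaacs} on the Sylow $5$- or $7$-subgroup, whose normalizer is small. This produces a solvable group with a large abelian quotient, or with a normal subgroup whose quotient is a triangle quotient of small order, and that contradicts the abelianization bound. I would run the same pattern for order $540$, which has no nonabelian simple section except possibly $A_5$, and for $1920$, where the only candidate simple section is again $A_5$ or $PSL(2,q)$ of small order dividing $1920$.

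In each case the aim is a contradiction from one of three sources: the generator orders (the product element must have order exactly $c$), the abelianization, or a solvable quotient forcing a smaller triangle quotient that is incompatible with the genus via \eqref{eq:triangleRH}. If all three rows are excluded this way, the lemma follows.
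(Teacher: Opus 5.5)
Your handling of the rows $(13,15)$ and $(19,41)$ matches the paper. After specialization, the maximal orders of holomorphic groups in genera $15$ and $41$ are $504$ and $960$, which are below $560$ and $1920$. Keep to the orientation-preserving list here, since allowing orientation-reversing maps gives $1920$ in genus $41$.

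The gap is the row $(11,19)$. Your stated reason, that ``these orders are well above the largest orders recorded'', is false in genus $19$. For example, $A_6$ is a quotient of $\Delta(2,4,5)$: take $x=(1\,6)(2\,4)$ and $y=(1\,4\,5\,6)(2\,3)$, so that $xy=(1\,2\,3\,4\,5)$ and $\langle x,y\rangle=A_6$. Then $(x,1)$ and $(y,1)$ generate $A_6\times C_2$ with the same orders, giving an action of order $720$ in genus $1+720/40=19$. So comparing $540$ with the maximum order cannot exclude this row. You would have to actually search the complete lists of groups and signatures in genus $19$, which you did not do, or give a genuine group-theoretic proof. Your backup sketch does not give one: ``a solvable quotient forcing a smaller triangle quotient'' and ``contradicts the abelianization bound'' are never carried out.

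The missing argument, as in the paper, runs as follows.
\begin{enumerate}
\item Compute $\Delta(2,3,10)^{ab}$ exactly. From $3\bar y=0$ and $10(\bar x+\bar y)=0$ you get $\bar y=0$, because $10\equiv1\pmod 3$. So the abelianization is $C_2$, not merely of order at most $6$.
\item Show $G$ is not perfect. A nonabelian simple quotient $S$ has order $60$, $180$ or $540$. The orders $180$ and $540$ are excluded by counting Sylow $5$-subgroups ($n_5\in\{6,36\}$) together with Burnside's transfer theorem.
\item If $S\cong A_5$, the kernel has order $9$, so $xy$ would map to an element of order $10$ in $A_5$, which is impossible.
\item Hence $G/G'\cong C_2$ and $|G'|=270$.
\item The subgroup $H=\langle y,xyx\rangle$ is normal, lies in $G'$, and has index at most $2$ in $G$. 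Therefore $G'=H$ is generated by two elements of order $3$.
\item A Sylow $2$-subgroup of $G'$ has order $2$, so Burnside's normal $2$-complement theorem gives $G'$ a subgroup of index $2$. That subgroup must contain both order-$3$ generators, which is a contradiction.
\end{enumerate}
Your observation that $A_5$ is the only possible simple section of a group of order $540$ is correct. However, the decisive steps are the order-$10$ obstruction and the index-$2$ contradiction in $G'$, and neither appears in your proposal.
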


\begin{proof}
In all three rows $s=1$, so the relevant data are
\[
\begin{array}{c|c|c}
g&|G|&\text{signature}\\ \hline
19&540 &(2,3,10)\\
15&560 &(2,4,5)\\
41&1920&(2,3,8).
\end{array}
\]
By Subsection~\ref{subsec:tame-specialization}, each action gives a
characteristic-zero action with the same genus, group order and signature.

For two rows no group-by-group classification is needed. Breuer's summary
table \cite[p.~91]{Breuer} gives the maximal orders of groups of
holomorphic automorphisms. The same values are recorded in Conder's
orientation-preserving table \cite[Case~A, genera 15 and 41]{ConderMax}:
\[
|\Aut|_{\max}=504\quad\text{in genus }15\quad\text{(type $(2,3,9)$)},
\]
\[
|\Aut|_{\max}=960\quad\text{in genus }41\quad\text{(type $(2,4,6)$)}.
\]
The orientation-preserving qualification matters here: Conder's second
list allows orientation-reversing automorphisms and gives order $1920$
in genus $41$, whereas automorphisms of the complex curve are
holomorphic and hence orientation-preserving. Thus the actions of orders
$560$ and $1920$ are impossible.

It remains to exclude the genus-$19$ row. We do this without using the
low-genus classification. Suppose that a group $G$ of order $540$ admits
a generating pair $x,y$ with
\begin{equation}\label{eq:2310-generators}
|x|=2,\qquad |y|=3,\qquad |xy|=10.
\end{equation}
Such a pair is supplied by the signature $(2,3,10)$; equivalently,
$G$ is a quotient of the triangle group $\Delta(2,3,10)$.

We first claim that $G$ is not perfect. Otherwise choose a maximal normal
subgroup $M\triangleleft G$. Then $S=G/M$ is a nonabelian simple group.
By Burnside's $p^a q^b$ theorem, $|S|$ is divisible by all three primes
$2,3,5$. Moreover a Sylow $2$-subgroup of $S$ cannot have order $2$:
otherwise it is central in its normalizer and Burnside's normal
$2$-complement theorem contradicts simplicity. Since $|S|\mid540$, this
leaves
\[
|S|\in\{60,180,540\}.
\]

The cases $|S|=180$ and $540$ are impossible. Let
$P\in\Syl_5(S)$. Then
\[
n_5(S)\in\{6,36\}.
\]
If $n_5(S)=6$, the conjugation action on the six Sylow $5$-subgroups
is faithful. Since a nonabelian simple group has no quotient of order
$2$, its image lies in $A_6$. For $|S|=540$ this is impossible by
order, while for $|S|=180$ it would be an index-$2$ subgroup of $A_6$,
hence normal, contradicting the simplicity of $A_6$.

If $n_5(S)=36$, then $|N_S(P)|$ is $5$ or $15$. In either case
\[
P\le Z(N_S(P)).
\]
For $|N_S(P)|=15$ this also follows from
\[
N_S(P)/C_S(P)\hookrightarrow\Aut(P)\cong C_4.
\]
Burnside's normal $5$-complement theorem again contradicts simplicity.

Hence $|S|=60$, so $|M|=9$. Since $xy$ has order $10$,
\[
\langle xy\rangle\cap M=1,
\]
because the order of this intersection divides both $10$ and $9$.
Thus the image of $xy$ in $S$ still has order $10$. But a simple group
of order $60$ has no element of order $10$. Indeed, if $c$ had order
$10$ and $P=\langle c^2\rangle$, then $n_5(S)=6$, hence
$|N_S(P)|=10$; since $\langle c\rangle\le N_S(P)$, equality holds and
$P\le Z(N_S(P))$, again contradicting Burnside's normal
$5$-complement theorem. This proves that $G$ is not perfect.

The abelianization of the triangle group $\Delta(2,3,10)$ is $C_2$:
in the abelianization the relation $(xy)^{10}=1$, together with
$x^2=y^3=1$, forces $y=1$. Since $G$ is a nonperfect quotient of
$\Delta(2,3,10)$, it follows that
\[
G/G'\cong C_2,
\qquad
|G'|=270.
\]

The subgroup
\[
H=\langle y,xyx\rangle
\]
is normalized by $x$. It is also normalized by $y$, since $y\in H$;
therefore $H\triangleleft G$ because $G=\langle x,y\rangle$.
Moreover $H\le G'$, while $G/H$ is generated by the image of $x$
and hence has order at most $2$. Since $[G:G']=2$, it follows that
\[
H=G'.
\]
In particular, $G'$ is generated by the two elements $y$ and $xyx$,
both of order $3$.

Let $T\in\Syl_2(G')$. Since $|G'|=270$, one has $|T|=2$, and therefore
\[
T\le Z(N_{G'}(T)).
\]
Burnside's normal $2$-complement theorem gives a normal subgroup
$K\triangleleft G'$ of index $2$. Thus $G'$ has a nontrivial quotient
$C_2$. This is impossible because $G'$ is generated by the two elements
$y$ and $xyx$, both of order $3$, and every homomorphism from $G'$ to
$C_2$ kills both generators. The genus-$19$ row is therefore impossible.
\end{proof}

It remains to treat
\[
(p,g)=(7,5),\qquad (11,9),
\]
and then to identify the surviving case $(p,g)=(5,3)$.

\subsection{\texorpdfstring{The genus-five case in characteristic $7$}
{The genus-five case in characteristic 7}}

\begin{proposition}\label{prop:seven}
There is no $\F_{49}$-maximal curve of genus $5$ admitting a tame
subgroup of automorphisms which is transitive on its rational points.
\end{proposition}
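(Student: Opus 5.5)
The plan is to show that the unique numerical candidate for $p=7$ is forced to be an explicit hyperelliptic curve, and then to check with a Hasse--Witt computation that this curve is not supersingular. By Lemma~\ref{lem:tamecandidates}, the only row with $p=7$ is $(g,N,s)=(5,120,1)$ with signature $(2,3,10)$. So $|G|=120$, $X/G\simeq\PP^1$, and $G$ is generated by $x,y$ with $|x|=2$, $|y|=3$, $|xy|=10$.

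\emph{Step 1: identify $G$.} I will first identify $G$ among the groups of order $120$ that are quotients of $\Delta(2,3,10)$. By Subsection~\ref{subsec:tame-specialization} there is a complex genus-$5$ action with this signature, so I will read the group off Breuer's genus-$5$ list \cite{Breuer}. As a consistency check I expect $G\cong C_2\times A_5$, and I would also eliminate the other candidates by hand:
\begin{itemize}
\item $S_5$ has no element of order $10$.
\item In $SL(2,5)$ the unique involution is central, so $xy$ would have order $6$, not $10$.
\item Groups with a normal Sylow $5$-subgroup, or solvable ones, are ruled out by abelianization arguments: $\Delta(2,3,10)^{ab}=C_2$, in the same spirit as in Lemma~\ref{lem:triangleexclude}.
\end{itemize}
For $C_2\times A_5$, the generating triple must be $x=(\iota,a)$, $y=(1,b)$, where $\iota$ generates the $C_2$ factor, and $(a,b,ab)$ is a $(2,3,5)$ generating triple of $A_5$.

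\emph{Step 2: show $X$ is hyperelliptic.} Let $\iota$ be the central involution. The image of the triple in $G/\langle\iota\rangle\cong A_5$ has orders $(2,3,5)$. Since the action is tame in characteristic $7$, Riemann--Hurwitz shows that $X/\langle\iota\rangle$ has genus $0$. Hence $X$ is hyperelliptic with hyperelliptic involution $\iota$, and $A_5$ embeds in $PGL(2,\overline{\F}_7)$ as the reduced automorphism group. The fixed points of $\iota$ are the points lying over the branch point of index $10$. They form the unique $A_5$-orbit of length $12$ on $\PP^1$, and there are $2g+2=12$ of them, which is consistent. Since $A_5\le PGL(2,\overline{\F}_7)$ is unique up to conjugacy (it is tame, by Dickson), I can normalize the orbit of length $12$ to the zeros of the icosahedral form. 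This gives
\[
X:\ Y^2=f(x)=x(x^{10}+11x^5-1)\equiv x^{11}+4x^6-x \pmod 7 .
\]

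\emph{Step 3: the Hasse--Witt computation.} A maximal curve is supersingular, so its Cartier--Manin matrix is nilpotent; in particular its $p$-rank is $0$. The Cartier--Manin matrix of $Y^2=f$ in characteristic $7$ has entries given by the coefficients of $x^{7i-j}$ in $f^{3}$, for $1\le i,j\le5$. I will expand $f^3=x^3(x^{10}+4x^5-1)^3$ and extract these entries. I expect to find a nonzero power of the matrix, or equivalently positive $p$-rank, which contradicts supersingularity. The matrix is sparse: $f^3$ only involves exponents $\equiv 3\pmod 5$, which should make this short.

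This third step is the main computational obstacle. If the matrix turns out to be nilpotent, I would instead count $\#X(\F_{49})$ directly, or use the $A_5$-decomposition of the Jacobian to reduce to an elliptic or abelian quotient that is visibly ordinary. Either route would contradict $N=120$.
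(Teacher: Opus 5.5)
Your route is valid but finishes differently from the paper. The first half of your Step~2 is exactly the paper's key step: $\iota=(xy)^5$ is central, so it fixes the $12$ points over the index-$10$ branch point, and Riemann--Hurwitz forces $g(X/\langle\iota\rangle)=0$.

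\textbf{How the paper finishes.} It gets the central involution directly from Sylow-$5$ counting, without identifying $G$. It then closes with a one-line count: the hyperelliptic involution is unique, hence defined over $\F_{49}$, so $X\to\PP^1$ is defined over $\F_{49}$ and $\#X(\F_{49})\le 2\cdot 50=100<120$. So your Step~1, the Dickson normalization and the Hasse--Witt computation are not needed. Your fallback ``count $\#X(\F_{49})$ directly'' is precisely where this trivial bound would have ended the argument.

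\textbf{Your computation.} The Hasse--Witt step you left as an expectation does go through. With $u=x^5$ one gets $(u^2+4u-1)^3\equiv u^6+5u^5+3u^4+5u^3+4u^2+5u+6 \pmod 7$, so
\[
f^3\equiv 6x^3+5x^8+4x^{13}+5x^{18}+3x^{23}+5x^{28}+x^{33}.
\]
The matrix $(c_{7i-j})_{1\le i,j\le 5}$ then has exactly one nonzero entry in each row and column:
\begin{itemize}
\item $c_3=6$ at $(1,4)$;
\item $c_{13}=4$ at $(2,1)$;
\item $c_{18}=5$ at $(3,3)$;
\item $c_{23}=3$ at $(4,5)$;
\item $c_{33}=1$ at $(5,2)$.
\end{itemize}
Hence it is invertible, the curve is ordinary, and this contradicts supersingularity.

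\textbf{What each route buys.} Yours gives a slightly stronger, purely geometric conclusion: the unique geometric candidate is ordinary, so it is not maximal over any $\F_{7^{2k}}$, and you never need rationality of the hyperelliptic involution. The cost is identifying $G\cong C_2\times A_5$, invoking uniqueness of $A_5\le PGL(2,\overline{\F}_7)$ and of its length-$12$ orbit, and doing an explicit computation.

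\textbf{A small gap in Step~1.} Abelianization alone does not dispose of solvable candidates. From $G/G'\cong C_2$ and $G'=\langle y,y^x\rangle$ you get $|G''|=20$. The Sylow $5$-subgroup of $G''$ is characteristic, hence normal in $G$. You then need the $\Delta(2,3,2)\simeq S_3$ argument from the paper to rule this out. Alternatively, quote Breuer's genus-$5$ list, or obtain the central involution directly as the paper does.
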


\begin{proof}
By Table~\ref{tab:candidates},
\[
|G|=120,
\qquad
\text{signature }(2,3,10).
\]
By Subsection~\ref{subsec:tame-specialization}, the signature
$(2,3,10)$ gives generators $x,y\in G$ such that
\[
|x|=2,\qquad |y|=3,\qquad |xy|=10.
\]
Put $c=xy$. We first extract directly from these data the central
involution needed below.

Let $P\in\Syl_5(G)$. We claim that $n_5(G)=6$. Indeed, if $P$ were
normal, then in $G/P$, which has order $24$, the image of $c$ would
have order dividing both $10$ and $24$, hence order at most $2$.
Thus $G/P$ would be a quotient of the spherical triangle group
$\Delta(2,3,2)\simeq S_3$, impossible because $|G/P|=24$.
Hence $n_5(G)=6$.

Let
\[
\varphi:G\longrightarrow \operatorname{Sym}(\Syl_5(G))\simeq S_6
\]
be the conjugation action, and let $K=\ker(\varphi)$. Since
$|N_G(P)|=120/6=20$, one has $K\le N_G(P)$. Moreover $5\nmid|K|$:
otherwise, by normality of $K$, it would contain all six Sylow
$5$-subgroups of $G$, and hence at least $1+6(5-1)=25$ elements,
contrary to $|K|\le20$. Therefore
\[
|K|\mid4.
\]
Now $S_6$ has no element of order $10$. Since $K$ is a $2$-group, the
$5$-part of $c$ survives in the quotient, so $\varphi(c)$ has order
$5$. Consequently
\[
z:=c^5\in K,
\]
and $|K|$ is $2$ or $4$.

Suppose that $|K|=4$. Then $G/K$ has order $30$ and acts faithfully and
transitively on the six Sylow $5$-subgroups. Since $K$ is a $2$-group,
the Sylow $5$-subgroups of $G$ map bijectively to those of $G/K$; hence
the stabilizer of $\overline P=PK/K$ is
$N_{G/K}(\overline P)=N_G(P)/K$, of order $5$. Thus $\overline P$ is
self-normalizing. Burnside's normal $5$-complement theorem then
gives a normal subgroup $H$ of order $6$. The conjugation action of
$\overline P$ on $H$ is trivial, because a group of order $6$ is
isomorphic to either $C_6$ or $S_3$, whose automorphism group has order
$2$ or $6$, respectively. Hence $\overline P$ is normal in $G/K$, a
contradiction. Therefore
\[
K=\langle z\rangle\simeq C_2.
\]
In particular, $z=c^5$ is a central involution of $G$. Notice also that
$G/K$ is generated by the images of $x$ and $y$ with product of order
$5$, so it is a quotient of $\Delta(2,3,5)\simeq A_5$; since
$|G/K|=60$, one has $G/K\simeq A_5$. We shall only need the centrality
of $z$.

Let $Q\in X$ be a point above the branch point of index $10$, so that
\[
G_Q=\langle c\rangle.
\]
The fibre of this branch point consists of
\[
\frac{|G|}{|G_Q|}
=
\frac{120}{10}
=
12
\]
points. Since $z$ is central and $z\in G_Q$, it belongs to every
conjugate of $G_Q$. Consequently, $z$ fixes all twelve points in this
fibre.

Put
\[
Y=X/\langle z\rangle
\]
and let $g_Y$ be the genus of $Y$. Since the characteristic is odd,
the quotient by $z$ is tame. If $r$ denotes the number of fixed points
of $z$, Riemann--Hurwitz gives
\[
2g(X)-2
=
2(2g_Y-2)+r.
\]
As $g(X)=5$ and $r\ge12$, we obtain
\[
8=2(2g_Y-2)+r,
\]
or equivalently
\[
r=12-4g_Y.
\]
Hence $r\le12$. Therefore
\[
r=12,
\qquad
g_Y=0.
\]
Thus $X$ is hyperelliptic and $z$ is its hyperelliptic involution.

The hyperelliptic involution is unique, hence it is defined over
$\F_{49}$. Therefore the quotient map
\[
X\longrightarrow Y
\]
is defined over $\F_{49}$. Since $Y$ has genus zero and the image of any point of
$X(\F_{49})$ gives an $\F_{49}$-rational point of $Y$, one has
\[
Y\simeq\PP^1_{\F_{49}}.
\]
Each $\F_{49}$-rational point of $Y$ has at most two
$\F_{49}$-rational points above it, and consequently
\[
\#X(\F_{49})
\le
2\#\PP^1(\F_{49})
=
2(49+1)
=
100.
\]
On the other hand, maximality gives
\[
\#X(\F_{49})
=
49+1+2\cdot5\cdot7
=
120,
\]
a contradiction.
\end{proof}

\subsection{\texorpdfstring{The genus-nine case in characteristic $11$}
{The genus-nine case in characteristic 11}}

\begin{proposition}\label{prop:eleven}
There is no $\F_{121}$-maximal curve of genus $9$ admitting a tame
subgroup of automorphisms which is transitive on its rational points.
\end{proposition}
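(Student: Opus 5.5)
From Table~\ref{tab:candidates}, the row $(p,g)=(11,9)$ has $N=320$ and $s=1$, so the action is free on $X(\F_{121})$. It also gives
\[
|G|=320=2^6\cdot5,
\]
and signature $(2,4,5)$. By Subsection~\ref{subsec:tame-specialization} we get generators $x,y\in G$ with $|x|=2$, $|y|=4$ and $|xy|=5$.

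Note that $G$ is a $\{2,5\}$-group, so it is solvable by Burnside's $p^aq^b$ theorem. The abelianization of $\Delta(2,4,5)$ is $C_2$. Indeed, from $x^2=y^4=(xy)^5=1$ we get $xy=(xy)^{5\cdot k}$-type relations which kill the odd part, and the abelianization is generated by $x$ with $y=x$. Hence $[G:G']=2$ and $|G'|=160$.

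I would iterate this with a Reidemeister--Schreier step. As in Lemma~\ref{lem:triangleexclude}, the subgroup $G'$ is generated by $y^2$, $xy^2x$ and $y\,xyx\cdots$-type elements. More cleanly, $G'$ is the image of the index-$2$ subgroup of $\Delta(2,4,5)$, which is the triangle group $\Delta(2,5,5)$, generated by $y^2$ and the two conjugates of $xy$. The abelianization of $\Delta(2,5,5)$ is $C_5$, so it has no $C_2$ quotient. On the other hand, the solvable group $G'$ of order $160$ has a nontrivial abelian quotient, which must therefore be a $5$-group, so $G'/G''\cong C_5$ and $|G''|=32$.

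Now $G''$ is the image of the index-$5$ subgroup of $\Delta(2,5,5)$, which is a Fuchsian group with signature $(2,2,2,2,2)$ of genus $0$. Its abelianization is elementary abelian $2$-group, consistent with the order so far. To get a contradiction, I would instead use a geometric quotient. The normal $2$-subgroup $O_2(G)$ is nontrivial; since $G$ is solvable with $O_5(G)$ possibly trivial, I would look at a minimal normal subgroup $E$. Then $Y=X/E$ has an action of $G/E$ with the induced signature, and $Y$ is maximal by Lemma~\ref{lem:quotientmax}. Riemann--Hurwitz then bounds $g(Y)$, and $G/E$ acts freely and transitively on the image of $X(\F_{121})$. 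This forces $\#Y(\F_{121})=320/|E|$ rational points to be compatible with $121+1+22g(Y)$.

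For instance, if $|E|=2$ then $Y$ would need $160=122+22g_Y$ points, which has no integral solution. If $|E|=4$ we need $80$ points, which is fewer than $122$ and so impossible unless some rational points of $Y$ are not images. I need care here, because the images of rational points are rational but not conversely. So the real condition is $\#Y(\F_{121})\ge 320/|E|$ together with the equality analysis from the orbit count of $G/E$ on $Y(\F_{121})$. The main step I expect to be hard is pinning down $E$ and $g_Y$. I would first try $z$ an involution in $Z(O_2(G))$ normal in $G$ (or a central element), and compute its fixed points via the signature, as in Proposition~\ref{prop:seven}: involutions fix only points over the index-$2$ and index-$4$ branch points. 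That yields $r\le 160+80$, bounding $g_Y$ and making $\#Y(\F_{121})=122+22g_Y$ contradict the number of rational points that must cover $Y(\F_{121})$ with fibres of size at most $2$.

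If this becomes messy, the fallback is the Hasse--Witt (Cartier) computation hinted at in the introduction. The specialization gives a complex genus-$9$ curve with a $(2,4,5)$ action of order $320$, which by Breuer's classification is a specific curve family (possibly a unique curve). I would then check that its reduction mod $11$ is not supersingular, which contradicts maximality.
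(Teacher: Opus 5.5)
Your group theory is correct. The chain $\Delta(2,4,5)\supset\Delta(2,5,5)\supset\Gamma(0;2,2,2,2,2)$ is a clean alternative to the paper's Fitting-subgroup argument for $|G''|=|O_2(G)|=32$ and $G/G''\cong D_5$.

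\textbf{The gap.} The step meant to produce the contradiction fails, because the point counts on the quotients are consistent. Take a central involution $z$, which exists by the paper's argument. A central element fixes either all or none of the points of a $G$-orbit. Every orbit whose stabilizers have even order has length $160$ or $80$, while Riemann--Hurwitz allows $z$ at most $2g+2=20$ fixed points. So $z$ is fixed-point-free and $Y=X/\langle z\rangle$ has genus $5$. By Lemma~\ref{lem:quotientmax}, $\#Y(\F_{121})=122+22\cdot5=232$.

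There is no numerical clash with this. The $320$ rational points of $X$ map onto one regular $\overline G$-orbit of length $160$. The remaining $72=40+32$ rational points of $Y$ are exactly the short $\overline G$-orbits of lengths $40$ and $32$; this is in fact the only possible decomposition of $232$. Their fibres in $X$ are pairs of $\F_{121^2}$-points interchanged by Frobenius. Since $320\le2\cdot232$, ``fibres of size at most $2$'' gives nothing. Likewise $X/O_2(G)\cong\PP^1$ only carries a regular $D_5$-orbit of $10$ rational points among $122$. The obstruction is arithmetic, not a point count.

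\textbf{The fallback.} It points in the right direction (non-supersingularity), but as written it is a plan rather than a proof. Lifting is legitimate, since tame three-point covers lift. However, Breuer's tables give groups and generating vectors, not equations. You would still need explicit models of every genus-$9$ curve with this $(2,4,5)$-action, their reduction at a prime above $11$, and a genus-$9$ Hasse--Witt computation.

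\textbf{The missing idea.} Descend to a quotient whose equation is forced by the symmetry. In the paper:
\begin{itemize}
\item $V=O_2(G)/\langle z\rangle\cong C_2^4$ acts on $Y$ with $Y/V\cong\PP^1$ and exactly five branch points of index $2$. Riemann--Hurwitz gives $r=5-4h$, and $h=1$ is impossible for an abelian cover with a single branch point.
\item A Sylow $5$-subgroup acts on $V$ with no nonzero fixed vector. Hence it permutes the five branch points transitively, and they normalize to $\mu_5=\{1,3,4,5,9\}\subset\F_{11}$.
\item A character of $V$ vanishing on exactly one of the five inertia generators cuts out an elliptic quotient
\[
E:\ w^2=t^4+t^3+t^2+t+1 .
\]
\item The Hasse invariant $[t^{10}](t^4+t^3+t^2+t+1)^5=381\equiv7\pmod{11}$ is nonzero, so $E$ is ordinary. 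This contradicts the supersingularity of every positive-genus quotient of the maximal curve $Y$.
\end{itemize}
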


\begin{proof}
By Table~\ref{tab:candidates},
\[
|G|=320,\qquad \text{signature }(2,4,5).
\]
Because the signature is $(2,4,5)$, there are generators $x,y$ of
$G$ such that
\[
 |x|=2,\qquad |y|=4,\qquad |xy|=5.
\]
Put $c=xy$ and $S=\langle c\rangle$. We first extract from these data
exactly the group structure needed below.

The Sylow $5$-subgroup $S$ is not normal. Indeed, if $S\triangleleft G$,
then in $G/S$ one has $\bar x\bar y=1$, so $G/S$ is generated by the
single involution $\bar x$; this contradicts $|G/S|=64$. Hence
$O_5(G)=1$. Since $|G|=2^6\cdot5$, Burnside's $p^a q^b$ theorem shows
that $G$ is solvable. Let
\[
        F=F(G)=O_2(G)
\]
be its Fitting subgroup. Then $F\ne1$ and, by the standard
self-centralizing property of the Fitting subgroup of a finite solvable
group,
\[
        C_G(F)\le F;
\]
see, for example, \cite{Isaacs}.

The abelianization of the triangle group $\Delta(2,4,5)$ is $C_2$.
Indeed, in the abelianization the relation $(xy)^5=1$, together with
$x^2=y^4=1$, gives $\bar x+\bar y=0$. Since $G$ is a nontrivial
solvable quotient of $\Delta(2,4,5)$, it is not perfect, and therefore
\[
        G/G'\simeq C_2.
\]

The subgroup $S$ acts nontrivially on $F$, for otherwise
$S\le C_G(F)\le F$. Let
\[
 d=\dim_{\F_2} F/\Phi(F).
\]
By the Burnside basis theorem, an automorphism of odd order acting
trivially on $F/\Phi(F)$ acts trivially on $F$. Hence the induced
$S$-action on $F/\Phi(F)$ is nontrivial, and therefore faithful. Thus
$5\mid |GL(d,2)|$, which forces $d\ge4$. Consequently
$|F|\ge16$, so
\[
        |F|\in\{16,32,64\}.
\]

The cases $|F|=16$ and $64$ are impossible. If $|F|=64$, then
$G/F$ has order $5$; the images of $x$ and $y$ are both trivial, so
they cannot generate $G/F$. If $|F|=16$, put $H=G/F$. Then
$|H|=20$, and the Sylow $5$-subgroup $\bar S$ of $H$ is normal. Since
$H/\bar S$ has order $4$, it is abelian, so $H'\le\bar S$ and
$|H'|\le5$. On the other hand, $H/H'$ is a quotient of $G/G'\simeq
C_2$, which would force $|H'|\ge10$, a contradiction. Therefore
\[
        |F|=32.
\]
It follows that $G/F$ has order $10$. Its abelianization is a quotient
of $C_2$, so it is nonabelian; hence
\[
        G/F\simeq D_5,
\]
where $D_5$ denotes the dihedral group of order $10$.

It remains to locate a central involution. Since $d\ge4$ and
$|F|=32$, either $d=4$ or $d=5$. If $d=4$, then $|\Phi(F)|=2$.
Thus
\[
        Z:=\Phi(F)
\]
is characteristic in $F$, hence normal in $G$; being of order $2$, it
is central in $G$. Moreover $F/Z\simeq C_2^4$.

Suppose now that $d=5$. Then $F\simeq C_2^5$. Since in this case
$C_G(F)=F$, the quotient $G/F\simeq D_5$ acts faithfully on the
$5$-dimensional $\F_2$-space $F$. A nontrivial irreducible
$\F_2C_5$-module has dimension
\[
        \operatorname{ord}_5(2)=4.
\]
By Maschke's theorem, the restriction to the normal subgroup
$C_5\triangleleft D_5$ therefore decomposes as a nontrivial
$4$-dimensional irreducible module plus a $1$-dimensional fixed space.
Let $Z$ be this fixed space. Then $|Z|=2$, and because $C_5$ is normal
in $D_5$, the subspace $Z$ is $D_5$-invariant. Hence again
$Z\triangleleft G$, so $Z\le Z(G)$, and $F/Z\simeq C_2^4$.

Thus in both cases, with
\[
        \overline G=G/Z,\qquad V=F/Z,
\]
we have
\[
        Z\simeq C_2,\qquad V\simeq C_2^4,\qquad
        \overline G/V\simeq D_5.
\]
Moreover a Sylow $5$-subgroup acts faithfully on $V$: for $d=4$ this
is the faithful $S$-action on $F/\Phi(F)$ above, while for $d=5$ it is the
nontrivial irreducible $4$-dimensional summand. No splitting assertion
for
\[
1\longrightarrow V\longrightarrow \overline G
\longrightarrow D_5\longrightarrow1
\]
is needed below. Let $z$ be the nontrivial element of $Z$.

We first show that $z$ is fixed-point-free. Since the action is tame,
every point stabilizer is cyclic. If $z$ fixed a point $P$, then,
because $z$ is central, it would fix every point of the orbit
$G\cdot P$. The only possible nontrivial stabilizer orders are
$2,4,5$. Since $z$ has order $2$, a stabilizer containing $z$ has
order $2$ or $4$, and therefore
\[
|G\cdot P|\ge\frac{320}{4}=80.
\]
On the other hand, if $r_z$ denotes the number of fixed points of $z$,
Riemann--Hurwitz for the tame double cover
$X\to X/\langle z\rangle$ gives
\[
r_z\le2g(X)+2=20.
\]
This is impossible. Hence $z$ is fixed-point-free.

Set
\[
Y=X/\langle z\rangle.
\]
The covering $X\to Y$ is unramified of degree $2$, so
Riemann--Hurwitz gives
\[
2\cdot9-2=2(2g(Y)-2),
\]
and hence
\[
g(Y)=5.
\]
By \cite[Theorem~3.10]{GSY}, the involution $z$ is defined over
$\F_{121}$. Hence Lemma~\ref{lem:quotientmax} applies, and $Y$ is
maximal over $\F_{121}$. In particular, $Y$ is supersingular.

Recall that
\[
\overline G=G/\langle z\rangle.
\]
The induced action of $\overline G$ on $Y$ is faithful: an element of
$G$ acting trivially on $Y=X/\langle z\rangle$ belongs to the deck group
$\langle z\rangle$.
Let $\overline P$ be the image of $P\in X$ in $Y$. If
$\overline g\in\overline G$ fixes $\overline P$, then a representative
$g\in G$ satisfies
\[
g(P)=P\qquad\text{or}\qquad g(P)=z(P).
\]
In the second case $zg$ fixes $P$. Thus every element of
$\overline G_{\overline P}$ has a representative in $G_P$. Since
$z$ is fixed-point-free,
\[
G_P\cap\langle z\rangle=1,
\]
and consequently the natural map induces an isomorphism
\[
G_P\xrightarrow{\;\sim\;}\overline G_{\overline P}.
\]
Therefore passage to $\overline G$ preserves all point-stabilizer
orders, and the signature of the $\overline G$-action on $Y$ remains
$(2,4,5)$.

Let $V\simeq C_2^4$ be the normal subgroup identified above and put
\[
Y_0=Y/V.
\]
A point stabilizer in $V$ is a subgroup of a point stabilizer in
$\overline G$, and hence is cyclic. Since $V$ is elementary abelian,
such a stabilizer is therefore either trivial or of order $2$. Let
\[
h=g(Y_0)
\]
and let $r$ be the number of branch points of the cover
$Y\to Y_0$. Each branch point has inertia group of order $2$, so
Riemann--Hurwitz gives
\[
2g(Y)-2
=
16(2h-2)+r\cdot16\left(1-\frac12\right).
\]
Since $g(Y)=5$, this becomes
\[
8=16(2h-2)+8r,
\]
and therefore
\[
r=5-4h.
\]
The only possibilities are
\[
(h,r)=(0,5)\qquad\text{or}\qquad(h,r)=(1,1).
\]
The latter is impossible. Indeed, for a connected tame abelian cover
of a genus-one curve with one branch point, the fundamental-group
relation has the form
\[
[a,b]\,c=1,
\]
where $c$ is an inertia generator. Since the Galois group is abelian,
this forces $c=1$, contrary to ramification. Hence
\[
Y/V\simeq\PP^1,\qquad r=5.
\]
Moreover the induced action of $\overline G/V$ on $Y/V$ is faithful,
since the kernel in $\overline G$ of the action on the quotient is
precisely the deck group $V$.

Let
\[
v_1,\ldots,v_5\in V
\]
be the nontrivial inertia generators at the five branch points. Since
the $V$-cover is connected, the $v_i$ generate $V$. The tame
fundamental-group relation gives
\[
v_1+\cdots+v_5=0.
\]
As $V$ has dimension $4$ over $\F_2$, the five generators have a
one-dimensional space of relations. Thus the displayed relation is
their unique nontrivial linear relation.

The quotient
\[
\overline G/V\simeq D_5
\]
acts on the five branch points. By the structural argument above, a Sylow
$5$-subgroup
\[
C_5\le\overline G
\]
acts faithfully on $V$. Since
\[
\operatorname{ord}_5(2)=4,
\]
the unique nontrivial irreducible $\F_2C_5$-module has dimension $4$.
Because $2\nmid5$, Maschke's theorem applies.  A faithful module must
contain the nontrivial irreducible $4$-dimensional summand; since $V$
itself has dimension $4$, there is no room for a trivial summand.
Hence $V$ is this nontrivial irreducible module. In particular,
\[
V^{C_5}=0.
\]

We claim that $C_5$ acts transitively on the five branch points. If it
fixed one of them, then it would normalize the corresponding inertia
subgroup $\langle v_i\rangle$. Since this subgroup has order $2$, its
unique nontrivial element $v_i$ would be fixed by $C_5$, contradicting
$V^{C_5}=0$. Hence $C_5$ has no fixed point on the set of five branch
points. An action of a group of order $5$ on a set of five elements
without fixed points is necessarily transitive. Consequently the
$D_5$-action is transitive as well.

We now work over $\overline{\F}_{11}$; no descent of the following
coordinate normalization is needed, since ordinarity and
supersingularity are geometric properties. Choose a generator $\rho$ of
the normal subgroup $C_5\triangleleft D_5$. Since $\rho$ is tame, it
has two fixed points on $\PP^1$; after a change of coordinate we may
write
\[
\rho(t)=\zeta t,
\]
where $\zeta$ is a primitive fifth root of unity. The five branch
points form a single $C_5$-orbit. Since $0$ and $\infty$ are the only
fixed points of $\rho$, neither belongs to this orbit. Thus, if
$\alpha$ is one branch point, the orbit is
\[
        \alpha\mu_5.
\]
After replacing $t$ by $t/\alpha$, we may therefore assume that the
five branch points are precisely
\[
\mu_5.
\]
Since $5\mid10=|\F_{11}^{*}|$, all fifth roots of unity lie in
$\F_{11}$; explicitly,
\[
\mu_5=\{1,3,4,5,9\}.
\]

Relabel the inertia generators so that $v_1$ corresponds to $t=1$.
Since
\[
v_1+\cdots+v_5=0
\]
is the unique relation among them, there exists a character
\[
\chi:V\longrightarrow C_2
\]
satisfying
\[
\chi(v_1)=0,\qquad
\chi(v_i)=1\quad(2\le i\le5).
\]
Indeed, the prescribed values respect the unique relation, since
\[
0+1+1+1+1=0
\qquad\text{in }\F_2.
\]
Put
\[
K=\ker(\chi).
\]
Then $E=Y/K$ is a double cover of $Y/V\simeq\PP^1$, branched exactly
at the four points
\[
3,4,5,9.
\]
Thus $E$ has genus $1$.

Over $\overline{\F}_{11}$, after the above change of coordinate and a
rescaling of the covering coordinate, $E$ has the model
\begin{equation}\label{eq:ellipticquotient}
E:\qquad
w^2=(t-3)(t-4)(t-5)(t-9)
=\frac{t^5-1}{t-1}
=t^4+t^3+t^2+t+1.
\end{equation}
For an odd-characteristic hyperelliptic model $w^2=f(t)$, the
Cartier operator on the regular differential $dt/w$ is nonzero exactly
when the coefficient of $t^{p-1}$ in $f(t)^{(p-1)/2}$ is nonzero; see,
for example, \cite[\S2]{Yui}. In genus one this is equivalent to
ordinarity. Thus, in characteristic
$11$, it suffices to compute
\[
[t^{10}]\,(t^4+t^3+t^2+t+1)^5
=
381
\equiv7\pmod{11}.
\]
Hence $E$ is ordinary.

On the other hand, $E$ is a quotient of the supersingular curve $Y$.
The induced maps on Jacobians give
\[
J(E)\xrightarrow{\pi^*}J(Y)
\xrightarrow{\operatorname{Nm}_\pi}J(E),
\]
with
\[
\operatorname{Nm}_\pi\circ\pi^*
=
[\deg(\pi)]
=
[|K|]
=
[8].
\]
Since $11\nmid8$, the image of $\pi^*$ is an abelian subvariety of
$J(Y)$ isogenous to $J(E)$. The Newton slopes of a supersingular
abelian variety, and hence of each of its abelian subvarieties up to
isogeny, are all $1/2$; see, for example, \cite[Chapter~I]{LiOort}.
Therefore $E$ is supersingular, contradicting its ordinarity.

Therefore no such curve $X$ exists.
\end{proof}

\subsection{The Klein quartic}

\begin{proposition}\label{prop:klein}
Every $\F_{25}$-maximal curve of genus $3$ is $\F_{25}$-isomorphic to
the $\F_{25}$-maximal form of Elkies' classical $S_4$-model of the
Klein quartic \cite[(1.11)]{ElkiesKlein}; see also
\cite[\S3.1]{BTT}. For this curve the full automorphism group acts
transitively on its $56$ rational points.
\end{proposition}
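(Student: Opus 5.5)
Two things must be shown: uniqueness of the $\F_{25}$-maximal genus-$3$ curve up to $\F_{25}$-isomorphism, and transitivity of its full automorphism group on the $56$ rational points. The plan is to establish uniqueness first, then to exhibit the Klein quartic model and check transitivity.

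\emph{Uniqueness.} Here $q=5$ and $g=3=\lfloor (q^2-q+4)/6\rfloor$ lies on the third genus bound of Lemma~\ref{lem:gaps}(iv). We also have $g<(q-1)^2/4=4$. Rather than rely on a genus-spectrum classification, I would use the standard fact that a maximal curve over $\F_{q^2}$ embeds, via the linear series $|(q+1)P_0|$ for a rational point $P_0$, as a curve satisfying the Frobenius linear equivalence $qP+\Phi(P)\sim(q+1)P_0$. For $q=5$ the embedding is the canonical-type series: $6P_0$ has degree $6>2g-2=4$, so one gets a curve in $\PP^r$ with $r=6-3=3$. A plane-quartic analysis is cleaner, though. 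The key point is that $X$ is not hyperelliptic; otherwise it would have at most $2\cdot 26=52<56$ rational points, by the same argument as in Proposition~\ref{prop:seven}. Hence $X$ is a smooth plane quartic with $56$ points over $\F_{25}$. The Frobenius-invariance of the maximal-curve linear series forces every rational point to be a Weierstrass/flex-type point of a specific kind. This pins down the quartic as a Fermat-type or Klein-type curve.

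In practice I expect to quote the known uniqueness result for $\F_{25}$-maximal genus-$3$ curves (cited in \cite[\S3.1]{BTT}) rather than redo it. I would also check that the curve listed there is the $\F_{25}$-maximal form of Elkies' $S_4$-model \cite[(1.11)]{ElkiesKlein}. This identification is the main obstacle if it must be done from scratch. The route I would try first goes through the Jacobian: it is isogenous to $E^3$ with $E$ supersingular over $\F_{25}$. I would then use that the principal polarizations on $E^3$ for the maximal Frobenius $[-5]$ come from $O$-hermitian lattices of rank $3$ over a maximal order in the quaternion algebra ramified at $5,\infty$, and that such a lattice is unique up to isometry, giving uniqueness up to $\F_{25}$-isomorphism.

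\emph{Transitivity.} The Klein quartic in characteristic $5$ still has automorphism group containing $PSL(2,7)$ of order $168$, with $5\nmid168$. Indeed, Elkies' $S_4$-model has good reduction at $5$, and the $PSL(2,7)$ action specializes. By \cite[Theorem~3.10]{GSY} all of these automorphisms are defined over $\F_{25}$, so they preserve $X(\F_{25})$. The $(2,3,7)$ triangle action has short orbits of lengths $84$, $56$ and $24$. The rational set has $56$ points and is a union of orbits. Since $56=24+\cdots$ admits no decomposition with $168$-orbits or the $84$-orbit, and $24+24+\ldots$ is impossible because there is only one $24$-orbit, the rational points must form exactly the $56$-orbit. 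Here I use that every orbit length lies in $\{24,56,84,168\}$, and that $56$ is not a sum of such lengths other than $56$ itself. Hence the orbit structure already gives transitivity of $PSL(2,7)\le\Aut(X)$, and therefore of $\Aut(X)$.
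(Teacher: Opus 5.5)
Your transitivity argument is correct, but it is organized differently from the paper's. The paper first determines the whole group. It takes the faithful $PSL(2,7)$-action from \cite[\S3.1]{BTT} and applies Roquette's theorem, valid because $p=5>g+1$, to get $|\Aut(X)|\le 84(g-1)=168$. It concludes $\Aut(X)\cong PSL(2,7)$, and only then counts orbits of $\Aut(X)$.

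You count orbits of the subgroup $H\cong PSL(2,7)$ directly. This is legitimate: \cite[Theorem~3.10]{GSY} makes $H$ preserve $X(\F_{25})$, and transitivity of $H$ passes to the overgroup $\Aut(X)$. So Roquette's theorem is not needed for the statement as posed. You should still write out the one line giving the signature rather than assert it: since $5\nmid 168$ and $|H|=84(g-1)$, tame Riemann--Hurwitz forces $X/H\simeq\PP^1$ with signature $(2,3,7)$. Hence there is exactly one orbit each of lengths $84$, $56$ and $24$.

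What the paper's extra step buys is the equality $\Aut(X)\cong PSL(2,7)$, and in particular $5\nmid|\Aut(X)|$. This is used later in Lemma~\ref{lem:noncentral} and in the converse part of Proposition~\ref{prop:tamefinal}. Your version does not provide it.

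For uniqueness, your fallback of quoting the literature is what the paper does, via \cite[Lemma~3.1(3)]{BMT}. However, you misplace the difficulty. Identifying the unique curve with Elkies' model is immediate once you know that the reduction of the $S_4$-model at $5$ is a smooth $\F_{25}$-maximal curve of genus $3$. This is exactly what \cite[\S3.1]{BTT} supplies; a direct count of $56$ points would also do. You never state this input, and it is the only thing the identification needs.

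The Jacobian route you propose instead has a false step. Principally polarized rank-$3$ $O$-hermitian lattices are not unique up to isometry. The standard lattice $O^3$, i.e.\ $E^3$ with the product polarization, lies in the same genus and is not the Jacobian of a smooth curve. You would need uniqueness among indecomposable lattices, which is a genuine class-number computation. Similarly, the ``Fermat-type'' alternative in your plane-quartic sketch cannot occur: since $5\equiv 1 \pmod 4$, the Fermat quartic is not supersingular in characteristic $5$.
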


\begin{proof}
The explicit classification in \cite[Lemma~3.1(3)]{BMT} states that
an $\F_{25}$-maximal curve of genus $3$ is unique up to
$\F_{25}$-isomorphism. We use only this uniqueness statement here.
The terminology and the classical plane equation for the
``$S_4$-model'' go back to Elkies \cite[(1.11)]{ElkiesKlein}.  In his
notation the model is defined over
\[
        k=\mathbb{Q}(\sqrt{-7})=\mathbb{Q}(\alpha),
        \qquad
        \alpha=\frac{-1+\sqrt{-7}}2,
\]
see \cite[\S1.3]{ElkiesKlein}.  Since
$\alpha^2+\alpha+2=0$ and the discriminant $-7\equiv3\pmod 5$ is a
nonsquare in $\F_5$, reduction at $5$ gives coefficient field
$\F_{25}$.  Bootsma--Tafazolian--Top \cite[\S3.1]{BTT} exhibit this
$S_4$-model as an $\F_{25}$-maximal curve of genus $3$. Hence every
$\F_{25}$-maximal curve of genus $3$ is $\F_{25}$-isomorphic to this
model. The terminology ``$S_4$-model'' refers to this classical choice
of coordinates, in which a maximal subgroup $S_4<PSL(2,7)$ is especially
transparent, and not to the full automorphism group of the curve.

The same model carries a faithful action of $PSL(2,7)$; see
\cite[\S3.1]{BTT}. Hence $|\Aut(X)|\ge168$. On the other hand,
Roquette's theorem applies because $p=5>g+1=4$: a genus-$3$ curve in
characteristic $5$ satisfies the Hurwitz bound
\[
        |\Aut(X)|\le84(g-1)=168,
\]
unless it is the exceptional Roquette curve, which in characteristic
$5$ has genus $(5-1)/2=2$; see \cite{Roquette}. Consequently
\[
\Aut(X)\cong PSL(2,7),
\qquad
|\Aut(X)|=168.
\]
Moreover, since $X$ is maximal, every geometric
automorphism of $X$ is defined over $\F_{25}$ by
\cite[Theorem~3.10]{GSY}. In particular,
$X(\F_{25})$ is invariant under $\Aut(X)$.

Since $5\nmid168$, the action is tame. The Riemann--Hurwitz formula
gives
\[
\frac{2g(X)-2}{|\Aut(X)|}
=
\frac{4}{168}
=
\frac1{42}.
\]
It follows that $X/\Aut(X)$ is rational with three branch points and
the signature is
\[
(2,3,7).
\]
Consequently the three short orbits have lengths
\[
\frac{168}{2}=84,
\qquad
\frac{168}{3}=56,
\qquad
\frac{168}{7}=24,
\]
while every other orbit has length $168$.

Maximality gives
\[
\#X(\F_{25})
=
25+1+2\cdot3\cdot5
=
56.
\]
Since $X(\F_{25})$ is $\Aut(X)$-invariant, it is a disjoint union of
full $\Aut(X)$-orbits. The only way to express $56$ as a sum of orbit
lengths from
\[
\{24,56,84,168\}
\]
is the single orbit of length $56$. Hence $\Aut(X)$ acts transitively
on $X(\F_{25})$.
\end{proof}

\begin{remark}
Here the phrase ``the $\F_{25}$-maximal Klein quartic'' refers to this
$\F_{25}$-isomorphism class; it should not be confused with a particular
standard plane equation of the Klein quartic over a smaller field.
\end{remark}

\begin{proposition}\label{prop:tamefinal}
Let $q=p^h$ be odd and let $X/\F_{q^2}$ be a non-Hermitian maximal
curve of genus at least two. Suppose that a tame subgroup
$G\le\Aut(X)$ is transitive on $X(\F_{q^2})$. Then
\[
q=5,\qquad g=3,
\]
and $X$ is the $\F_{25}$-maximal Klein quartic.

Conversely, the $\F_{25}$-maximal Klein quartic satisfies these
conditions with $G=\Aut(X)\cong PSL(2,7)$.
\end{proposition}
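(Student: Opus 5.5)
This proposition assembles the lemmas of this section, so the proof is mainly bookkeeping. First I apply Lemma~\ref{lem:tamereduction}: a non-Hermitian maximal curve with a tame transitive subgroup forces $q=p\le19$. The case $p=3$ is excluded by Lemma~\ref{lem:gaps}(ii), since $g\le1$ there. Lemma~\ref{lem:tamecandidates} then confines $(p,g)$ to the seven rows of Table~\ref{tab:candidates}. I then remove rows one at a time:
\begin{enumerate}[label=\textup{(\alph*)}]
\item Lemma~\ref{lem:gapeliminate} removes $(11,25)$.
\item Lemma~\ref{lem:triangleexclude} removes $(11,19)$, $(13,15)$ and $(19,41)$.
\item Proposition~\ref{prop:seven} removes $(7,5)$.
\item Proposition~\ref{prop:eleven} removes $(11,9)$.
\end{enumerate}
The only surviving row is $(p,g)=(5,3)$, so $q=5$ and $g=3$. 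By Proposition~\ref{prop:klein}, every $\F_{25}$-maximal curve of genus $3$ is $\F_{25}$-isomorphic to the maximal $S_4$-model of the Klein quartic. This identifies $X$.

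For the converse, Proposition~\ref{prop:klein} gives $\Aut(X)\cong PSL(2,7)$, of order $168$, acting transitively on the $56$ rational points. Since $5\nmid168$, this group is tame in characteristic $5$, so $G=\Aut(X)$ is an admissible choice. I also need the Klein quartic to be non-Hermitian, so that it satisfies the hypotheses of the statement. This holds because the Hermitian genus over $\F_{25}$ is $q(q-1)/2=10\ne3$; equivalently, Lemma~\ref{lem:gaps}(i) applies.

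The one step that needs care is making sure the case analysis has no gaps. The row list of Lemma~\ref{lem:tamecandidates} must be exhaustive. In addition, the hypotheses of each elimination result must match the situation here: each of them is phrased for a tame transitive subgroup $G$, not for the full $\Aut(X)$, which is exactly our hypothesis. Apart from this check, no new argument is needed.
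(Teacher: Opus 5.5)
Your proposal is correct and follows the paper's proof essentially step for step. You use Lemma~\ref{lem:tamereduction} to reach $q=p\le19$, Lemma~\ref{lem:gaps}(ii) to discard $p=3$, and Lemma~\ref{lem:tamecandidates} for the seven rows; you then apply the same four elimination results and use Proposition~\ref{prop:klein} for both the identification and the converse. Your explicit check that the Klein quartic is non-Hermitian (genus $3\ne 10$) is a small point the paper leaves implicit, and it does not change the argument.
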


\begin{proof}
Lemma~\ref{lem:tamereduction} gives
\[
q=p\le19,
\]
so in particular $h=1$. The case $p=3$ is impossible by
Lemma~\ref{lem:gaps}(ii), since a non-Hermitian maximal curve over
$\F_9$ would satisfy
\[
g\le\frac{(3-1)^2}{4}=1.
\]
Thus
\[
p\in\{5,7,11,13,17,19\}.
\]

Lemma~\ref{lem:tamecandidates} reduces these possibilities to the
seven rows of Table~\ref{tab:candidates}; in particular, no row with
$p=17$ remains. Lemma~\ref{lem:gapeliminate} excludes $(p,g)=(11,25)$,
while Lemma~\ref{lem:triangleexclude} excludes
\[
(p,g)=(11,19),\qquad (13,15),\qquad (19,41).
\]
Proposition~\ref{prop:seven} excludes $(p,g)=(7,5)$, and
Proposition~\ref{prop:eleven} excludes $(p,g)=(11,9)$. Hence the only
remaining possibility is
\[
(p,g)=(5,3).
\]
Proposition~\ref{prop:klein} then identifies $X$ with the
$\F_{25}$-maximal Klein quartic.

Conversely, Proposition~\ref{prop:klein} shows that the full
automorphism group of the $\F_{25}$-maximal Klein quartic is
$PSL(2,7)$ of order $168$ and acts transitively on its $56$ rational
points. Since $5\nmid168$, this action is tame.
\end{proof}

\section{The wild case}

Throughout this section, $q=p^h$ is odd, $X/\F_{q^2}$ is maximal of
genus $g\ge2$, and $G=\Aut(X)$ is the full geometric automorphism
group.  By \cite[Theorem~3.10]{GSY}, every element of $G$ is defined over
$\F_{q^2}$, so $G$ preserves
\[
          \Omega=X(\F_{q^2}).
\]
Assume that $G$ is transitive on $\Omega$ and that $p\mid|G|$.  Put
\[
          N=\#\Omega=q^2+1+2gq.
\]
Since $N\equiv1\pmod p$, the orbit--stabilizer identity
\[
          |G|=N|G_P|
\]
shows that $p\mid|G_P|$ for every $P\in\Omega$.

\subsection{\texorpdfstring{Fixed points and Sylow $p$-subgroups}{Fixed points and Sylow p-subgroups}}

By \cite[Lemma~1.1]{FGT}, the $\F_{q^2}$-Frobenius acts as $[-q]$ on
the Jacobian of a maximal curve; in particular, $X$ has $p$-rank zero.
We shall use the following consequence in the form proved in
\cite[Proposition~3.12]{GSY}.

\begin{lemma}\label{lem:pgroup-fixed}
Let $U\le\Aut(X)$ be a nontrivial $p$-subgroup.  Then $U$ fixes a unique
point $P\in\Omega$ and acts freely on $X\setminus\{P\}$.
\end{lemma}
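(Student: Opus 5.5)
The proof has two parts: first show that every fixed point of $U$ is $\F_{q^2}$-rational, then show that $U$ has exactly one fixed point and acts freely off it. Both parts rest on the fact, recalled just above, that a maximal curve has $p$-rank zero. Fix a nontrivial $p$-subgroup $U\le\Aut(X)$. By \cite[Theorem~3.10]{GSY}, every element of $U$ is defined over $\F_{q^2}$, so $U$ commutes with the $\F_{q^2}$-Frobenius $\Phi$.

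\emph{Existence of a fixed point.} Apply the Deuring--Shafarevich formula to the Galois cover $X\to X/U$, which is defined over $\F_{q^2}$. Write $\gamma$ for $p$-ranks:
\[
\gamma(X)-1=|U|\bigl(\gamma(X/U)-1\bigr)+\sum_{Q\in X}\bigl(|U_Q|-1\bigr).
\]
By Lemma~\ref{lem:quotientmax}, $X/U$ is maximal, so $\gamma(X/U)=0$; also $\gamma(X)=0$. The formula becomes
\[
-1=-|U|+\sum_Q(|U_Q|-1).
\]
\emph{Counting the fixed points.} Each short $U$-orbit has length at most $|U|/p$ and contributes at least $|U|(1-1/p)$ to the sum. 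Write the sum as $\sum_{\text{orbits}}|U|\bigl(1-1/|U_Q|\bigr)=|U|-1$. Since each term is at least $|U|(1-1/p)\ge |U|/2$, there is at most one short orbit. The sum is positive, so there is exactly one short orbit $\mathcal O$, and it satisfies
\[
|U|\bigl(1-1/|U_Q|\bigr)=|U|-1 .
\]
This forces $|U_Q|=|U|$, so $\mathcal O$ is a single point $P$ fixed by all of $U$, and every other point has trivial stabilizer. That is exactly the assertion that $U$ fixes a unique point and acts freely on $X\setminus\{P\}$.

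\emph{Rationality.} Because $\Phi$ commutes with $U$, the point $\Phi(P)$ is also fixed by $U$. Uniqueness then gives $\Phi(P)=P$, so $P\in\Omega$.

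\emph{Main obstacle.} The delicate step is justifying the Deuring--Shafarevich formula in this setting: it is stated for Galois covers by $p$-groups, and it needs the quotient's $p$-rank. The quotient's $p$-rank is zero via Lemma~\ref{lem:quotientmax}, and that lemma requires $U$ to be defined over $\F_{q^2}$, which is supplied by \cite[Theorem~3.10]{GSY}. If one prefers to avoid Deuring--Shafarevich, the fallback is to cite \cite[Proposition~3.12]{GSY} directly, as the paper indicates.
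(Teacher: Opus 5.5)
Your proof is correct. Its last step matches the paper's: by \cite[Theorem~3.10]{GSY} every element of $U$ commutes with Frobenius, so Frobenius preserves the fixed-point set of $U$, and uniqueness forces $P\in\Omega$.

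The difference is in the first step. The paper simply quotes \cite[Proposition~3.12]{GSY} for ``unique fixed point, free action elsewhere.'' You instead derive this from the Deuring--Shafarevich formula plus an orbit count. Two short orbits would contribute at least $2|U|(1-1/p)\ge|U|>|U|-1$, and a lone short orbit contributing $|U|-1$ must be a single point. This is the standard argument for curves of $p$-rank zero, so your version makes the lemma self-contained, relying only on a textbook formula.

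You can also simplify the step you flag as the main obstacle; it is not delicate. You do not need Lemma~\ref{lem:quotientmax}, nor the rationality of $U$, to get $\gamma(X/U)=0$. With $\gamma(X)=0$ the formula reads $-1=|U|\bigl(\gamma(X/U)-1\bigr)+\sum_Q(|U_Q|-1)$. The sum is nonnegative, so this is impossible unless $\gamma(X/U)=0$. Hence the only arithmetic input to the fixed-point statement is $\gamma(X)=0$, which the paper takes from \cite[Lemma~1.1]{FGT}. The result \cite[Theorem~3.10]{GSY} is needed only for the rationality of $P$.
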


\begin{proof}
By \cite[Proposition~3.12]{GSY}, $U$ fixes a unique point $P\in X$ and
acts freely on $X\setminus\{P\}$.  By \cite[Theorem~3.10]{GSY}, every
element of $U$ is defined over $\F_{q^2}$ and hence commutes with the
$\F_{q^2}$-Frobenius $F$.  Therefore $F(P)$ is again fixed by $U$.
By uniqueness, $F(P)=P$, so $P\in\Omega$.
\end{proof}

\begin{lemma}\label{lem:sylowbijection}
For $P\in\Omega$, put $S_P=G_P^{(1)}$.  Then $S_P$ is a Sylow
$p$-subgroup of $G$, and
\[
          P\longmapsto S_P
\]
is a $G$-equivariant bijection from $\Omega$ onto $\Syl_p(G)$.  Moreover,
\begin{equation}\label{eq:stabilizer}
          N_G(S_P)=G_P=S_P\rtimes C_m,
\end{equation}
where $C_m$ is cyclic, $(m,p)=1$, and $m$ is independent of $P$.  In
particular,
\begin{equation}\label{eq:number-sylow}
          |\Syl_p(G)|=N=q^2+1+2gq.
\end{equation}
Distinct Sylow $p$-subgroups have trivial intersection; equivalently,
$S_P$ is a TI subgroup of $G$.  Finally, $S_P$ acts freely on
$X\setminus\{P\}$.
\end{lemma}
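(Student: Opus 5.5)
I will build everything on Lemma~\ref{lem:pgroup-fixed} together with the standard local structure of a point stabilizer: for any point $P$, $G_P=G_P^{(1)}\rtimes C$ with $G_P^{(1)}$ the unique Sylow $p$-subgroup of $G_P$ and $C$ cyclic of order prime to $p$ \cite[Theorem~11.49]{HKT}.

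\textbf{Sylow property and injectivity.} Fix $P\in\Omega$. Because $N\equiv1\pmod p$ and $|G|=N|G_P|$, the full $p$-part of $|G|$ already divides $|G_P|$. Hence $S_P=G_P^{(1)}$ is a Sylow $p$-subgroup of $G$; it is nontrivial since $p\mid|G|$. By Lemma~\ref{lem:pgroup-fixed}, $S_P$ fixes a unique point, and that point must be $P$, and $S_P$ acts freely on $X\setminus\{P\}$. This proves the final assertion. If $S_P=S_Q$, the uniqueness of the fixed point gives $P=Q$, so the map is injective. Equivariance follows from $G_{\sigma P}^{(1)}=\sigma G_P^{(1)}\sigma^{-1}$. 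Surjectivity comes from Sylow's theorem: $S_P$ is a Sylow subgroup, every Sylow subgroup is a conjugate $\sigma S_P\sigma^{-1}=S_{\sigma P}$, and $\sigma P\in\Omega$ because $G$ preserves $\Omega$. This gives \eqref{eq:number-sylow}.

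\textbf{Normalizer.} An equivariant bijection matches stabilizers, so $N_G(S_P)=G_P$. Directly: if $\sigma$ normalizes $S_P$, then $\sigma P$ is fixed by $S_P$, so $\sigma P=P$. Conversely, $S_P\trianglelefteq G_P$. The decomposition $G_P=S_P\rtimes C_m$ is the local structure recalled above. The integer $m$ does not depend on $P$ because $G$ is transitive on $\Omega$, so all these stabilizers are conjugate.

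\textbf{TI property.} Let $S_P\ne S_Q$. Then $P\ne Q$, and any element of $S_P\cap S_Q$ fixes both $P$ and $Q$. Since $S_P$ acts freely on $X\setminus\{P\}$, this element is trivial.

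The only delicate point is making sure that Lemma~\ref{lem:pgroup-fixed} is applied to the whole group $S_P$ rather than element by element, so that freeness holds for all of $S_P$. Apart from that, the argument is bookkeeping.
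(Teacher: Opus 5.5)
Your proof is correct and follows the paper's argument essentially step for step: the same Sylow count from $p\nmid N$, the same local structure $G_P=G_P^{(1)}\rtimes C_m$, the same normalizer, TI and freeness arguments via Lemma~\ref{lem:pgroup-fixed}. The one minor variation is surjectivity. You get it from Sylow conjugacy together with equivariance and the $G$-invariance of $\Omega$. The paper instead applies Lemma~\ref{lem:pgroup-fixed} to an arbitrary Sylow subgroup $S$ and uses that $S_P$ is the unique Sylow $p$-subgroup of $G_P$ to conclude $S=S_P$.
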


\begin{proof}
Since $|G|=N|G_P|$ and $p\nmid N$, the $p$-parts of $|G|$ and $|G_P|$
coincide.  By \cite[Proposition~3.8.5]{Stichtenoth}, $G_P^{(1)}$ is a
normal $p$-subgroup of $G_P$ and $G_P/G_P^{(1)}$ is cyclic of order
prime to $p$.  Hence $S_P=G_P^{(1)}$ is the unique Sylow $p$-subgroup
of $G_P$, and therefore also a Sylow $p$-subgroup of $G$.  By
Schur--Zassenhaus,
\[
          G_P=S_P\rtimes C_m
\]
for a cyclic group $C_m$ of order prime to $p$.  Since $G$ is transitive
on $\Omega$, the stabilizers $G_P$ are conjugate; hence $m$ is
independent of $P$.

Conversely, let $S\in\Syl_p(G)$.  By Lemma~\ref{lem:pgroup-fixed}, $S$
has a unique fixed point $P\in\Omega$, so $S\le G_P$.  Both $S$ and
$S_P$ have order equal to the $p$-part of $|G_P|$; since $S_P$ is the
unique Sylow $p$-subgroup of $G_P$, it follows that $S=S_P$.  This proves
surjectivity.  Injectivity follows from the uniqueness of the fixed
point, and the map is $G$-equivariant because
\[
          S_{\sigma(P)}=\sigma S_P\sigma^{-1}
          \qquad(\sigma\in G).
\]
Thus \eqref{eq:number-sylow} follows immediately.

Since $S_P\triangleleft G_P$, we have $G_P\le N_G(S_P)$.  Conversely,
an element normalizing $S_P$ must preserve its unique fixed point, and
hence belongs to $G_P$.  Thus
\[
          N_G(S_P)=G_P.
\]

If $P\ne Q$ and $1\ne A\le S_P\cap S_Q$, then the nontrivial
$p$-subgroup $A$ fixes both $P$ and $Q$, contradicting
Lemma~\ref{lem:pgroup-fixed}.  Hence distinct Sylow $p$-subgroups have
trivial intersection, so $S_P$ is a TI subgroup of $G$.

Finally, Lemma~\ref{lem:pgroup-fixed} shows directly that $S_P$ acts
freely on all of $X\setminus\{P\}$.
\end{proof}

\subsection{The noncyclic Sylow case}

\begin{proposition}\label{prop:noncyclic}
If a Sylow $p$-subgroup of $G$ is noncyclic, then
\[
          X\simeq_{\F_{q^2}}\cH_{q+1}.
\]
\end{proposition}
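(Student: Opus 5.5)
Let $S=S_P\in\Syl_p(G)$ and write $N_G(S)=G_P=S\rtimes C_m$ as in Lemma~\ref{lem:sylowbijection}. I will apply Theorem~\ref{thm:GMP-TI} with $A=G$, $Y=X$ and $Q=S$. Its hypotheses hold: $p$ is odd, so the quaternion proviso is empty. By Lemma~\ref{lem:pgroup-fixed}, $S$ fixes exactly one point and acts freely on its complement. The normalizer has the form $SC$ with $C$ cyclic, by \eqref{eq:stabilizer}. Finally $N_G(S)\ne G$, because $|\Syl_p(G)|=N>1$. The theorem then gives a normal $p'$-subgroup $M=Z(N_G(S))$ with $G/M$ almost simple. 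The socle $T$ of $G/M$ is $PSL(2,p^a)$ with $a\ge2$, or $PSU(3,p^a)$, or (when $p=3$) ${}^2G_2(3^{2a+1})'$. Moreover $G$ is $2$-transitive on $\Syl_p(G)$, hence on $\Omega$.

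The next step is to count. Because $M$ is a $p'$-group, the Sylow $p$-subgroups of $G$ correspond bijectively to those of $G/M$. A Sylow $p$-subgroup of an almost simple group whose socle is one of the listed groups of Lie type lies inside $T$, since the outer automorphism group contributes field automorphisms only up to a $p$-part that is controlled. So $N$ should equal $|\Syl_p(T)|$, which is $p^a+1$, $p^{3a}+1$ or $3^{3(2a+1)}+1$ in the three cases. Matching these against $N=q^2+1+2gq$ gives a Diophantine condition. In the $PSU(3,p^a)$ case it reads $p^{3a}=q^2+2gq$. Writing $p^{3a}=q(q+2g)$ and using $g\le q(q-1)/2$ from Lemma~\ref{lem:gaps}(i) forces $q+2g$ to be a power of $p$ lying between $q+4$ and $q^2$. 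Hence $p^{3a}=q^3$ or $q^{3/2}\cdot\ldots$, and combining this with the genus spectrum should force $p^a=q$ and $g=q(q-1)/2$. Lemma~\ref{lem:gaps}(i) then makes $X$ Hermitian. In the $PSL(2,p^a)$ case, $p^a=q(q+2g)$ means $S$ is elementary abelian of order $p^a$. The Ree case is similar to the unitary one.

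To exclude the $PSL$ and Ree cases I will use Henn's classification of curves with $|\Aut(X)|\ge 8g^3$, together with Riemann--Hurwitz for the action of $G_P$. Since $S$ acts freely off $P$, the cover $X\to X/S$ is ramified only at $P$. If $X/S$ had positive genus, Lemma~\ref{lem:quotientmax} would make it maximal and would bound $|S|$ severely; I expect $X/S$ to be rational. I also expect $|G|$ to be large relative to $g$, so that Henn's list applies: it includes the Hermitian curves and curves $y^2=x^{p^a}-x$ with automorphism group related to $PSL(2,p^a)$. For $y^2=x^{p^a}-x$ we have $g=(p^a-1)/2$, so $N=p^a+1$ would give $q^2+(p^a-1)q=p^a$, which is impossible for $q\ge3$. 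The Ree-type curves in Henn's list have known genus, and their point counts do not fit $q^2+1+2gq$ with $p$-rank zero.

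I expect the main obstacle to be the counting step: proving that $|G|$ is large enough for Henn's theorem to apply, or equivalently pinning down $g$ in terms of $p^a$ from the Riemann--Hurwitz contribution $2g-2=-2|S|+d_P$ with the wild different exponent $d_P$. My first attempt will use $2$-transitivity. The stabilizer $C_m$ acts semiregularly on $\Omega\setminus\{P\}$, and $S$ does too by Lemma~\ref{lem:sylowbijection}. Hence $|S|$ divides $N-1=q(q+2g)$. Comparing this with the standard orders of the socles should give $|G|\gg g^3$ directly.
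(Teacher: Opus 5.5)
Your reduction via Theorem~\ref{thm:GMP-TI} is the same as in the paper, but the unitary case has a genuine gap.

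From $N=|\Syl_p(T)|=r^3+1$ with $r=p^a$ you only obtain $r^3=q(q+2g)$. This equation together with Lemma~\ref{lem:gaps} does \emph{not} force $r=q$. Writing $q=p^h$ and $q+2g=p^b$ gives $g=q(p^{b-h}-1)/2$ with $3a=h+b$. For instance $q=p^4$, $r=p^3$, $g=p^4(p-1)/2$ satisfies $q(q+2g)=p^9=r^3$, and it lies well inside the non-Hermitian range: for $p=3$ one has $g=81$, against the bound $1080$ of Lemma~\ref{lem:gaps}(iv).

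What is missing is a relation between $g$ and $r$, and counting Sylow subgroups cannot supply it. The paper obtains it by applying Henn's classification in \emph{every} case, not only in the $PSL$ and Ree cases. Once $X$ is geometrically $\cH_{r+1}$, one has $g=r(r-1)/2$, so $r^3=q\bigl(q+r(r-1)\bigr)$. Comparing $p$-adic valuations, $r<q$ would force $q=r^2$ and then $r^3=r^3(2r-1)$. Hence $r=q$, so $g=q(q-1)/2$, and Lemma~\ref{lem:gaps}(i) applies.

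The hypothesis of Henn's theorem, which you list as the expected main obstacle, is immediate. Assuming $X$ is non-Hermitian, Lemma~\ref{lem:gaps}(ii) gives $q^2>4g$. Double transitivity on $\Omega$ then gives $|G|\ge N(N-1)>(2gq)^2>16g^3$.

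Drop the route through semiregularity of $C_m$ on $\Omega\setminus\{P\}$; that claim is false in general. On $\cH_{q+1}$, the map $(x,y)\mapsto(\lambda^{q+1}x,\lambda y)$ lies in the cyclic complement of the stabilizer of $P_\infty$, and it also fixes the rational point $(0,0)$.

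Some smaller points:
\begin{enumerate}
\item Henn's list contains no Ree curves. In odd characteristic only geometrically Hermitian curves and Roquette curves $y^2=x^r-x$ occur, so the Ree case is simply absorbed by Henn.
\item The equality $|\Syl_p(G/M)|=|\Syl_p(T)|$ should be justified by the TI property: two distinct Sylow subgroups of $G/M$ with the same intersection with $T$ would meet nontrivially. A ``controlled'' outer $p$-part is not a valid reason, since field automorphisms of order $p$ exist when $p\mid a$.
\item The bijection $\Syl_p(G)\to\Syl_p(G/M)$ uses $M\le N_G(S)$, not merely that $M$ is a $p'$-group.
\end{enumerate}

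Your exclusion of the Roquette curves by the count $N=r+1$, $g=(r-1)/2$ is a valid alternative to the paper's argument. The paper instead first shows $M=1$: a nontrivial $p'$-element cannot fix all $N>2g+2$ points of $\Omega$. It then uses that the hyperelliptic involution would be central in the centerless almost simple group $G$.
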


\begin{proof}
Suppose, to the contrary, that $X$ is not Hermitian.  Then
Lemma~\ref{lem:gaps}(ii) gives
\begin{equation}\label{eq:nonherm-genus-wild}
          g\le\frac{(q-1)^2}{4}.
\end{equation}

Let $S$ be a noncyclic Sylow $p$-subgroup, with unique fixed point
$P\in\Omega$.  By Lemma~\ref{lem:sylowbijection},
\[
          N_G(S)=G_P=S\rtimes C_m,
\]
where $C_m$ is cyclic of order prime to $p$, and $S$ is a TI subgroup
of $G$.  Moreover,
\[
          [G:N_G(S)]=[G:G_P]=N>1,
\]
so $N_G(S)\ne G$.  Since $p$ is odd and $S$ is noncyclic, all hypotheses of
Theorem~\ref{thm:GMP-TI} are satisfied.  Hence, with
\[
          M:=Z(G_P),
\]
the subgroup $M$ is a normal $p'$-subgroup of $G$, the quotient $G/M$
is almost simple, and $G$ acts doubly transitively on $\Syl_p(G)$.

We first show that $M=1$.  Since $M\le G_P$, it fixes $P$; normality
then implies that $M$ fixes every point of the $G$-orbit $\Omega$.
If $M\ne1$, choose an element $\tau\in M$ of prime order
$\ell\ne p$.  If $f_\tau$ denotes the number of fixed points of
$\tau$, Riemann--Hurwitz for the tame cyclic cover
$X\to X/\langle\tau\rangle$ gives
\[
          f_\tau\le
          \frac{2g-2+2\ell}{\ell-1}
          \le 2g+2.
\]
On the other hand $\tau$ fixes all $N$ points of $\Omega$, while
\[
          N-(2g+2)=q^2-1+2g(q-1)>0,
\]
a contradiction.

Thus $G$ itself is almost simple.  The $G$-equivariant bijection in
Lemma~\ref{lem:sylowbijection} transfers the double transitivity on
$\Syl_p(G)$ to $\Omega$.  In particular,
\[
          |G|\ge N(N-1).
\]
By \eqref{eq:nonherm-genus-wild}, $q^2>4g$.  Since
$N>2gq$ and $N-1>2gq$,
\[
          |G|>4g^2q^2>16g^3>8g^3.
\]
Henn's classification \cite[Theorem~11.127]{HKT} now applies.  The
Suzuki and the characteristic-two hyperelliptic cases in that list are
excluded because $p$ is odd; the remaining odd-characteristic cases
are the geometrically Hermitian curves and the Roquette curves
\[
          y^2=x^r-x,\qquad r=p^a.
\]
A Roquette curve is hyperelliptic.  Its hyperelliptic involution is
central in the full automorphism group, by uniqueness of the
hyperelliptic $g^1_2$.  On the other hand an almost simple group is
centerless: if $T$ is its nonabelian simple socle, then
$Z(G)\le C_G(T)=1$.  Hence the Roquette case is impossible.  We are
left with
\[
          X_{\overline{\F}_p}\simeq\cH_{r+1}
\]
for some $r=p^a$.  Since $g=r(r-1)/2\ge2$, we have $r\ge3$.

Under this geometric identification $G\simeq PGU(3,r)$.  The latter
group acts transitively on the $r^3+1$ rational points of the Hermitian
curve; see \cite[Theorem~12.24(iv)]{HKT}.  A point stabilizer has a
unique Sylow $p$-subgroup by \cite[Lemma~11.44]{HKT}, and this is also
a Sylow $p$-subgroup of $G$ because its index $r^3+1$ is prime to $p$.
Since every nontrivial $p$-subgroup fixes a unique point, the Sylow
$p$-subgroups of $G$ are therefore in bijection with these points.
Thus $G$ has exactly $r^3+1$ Sylow $p$-subgroups.  By
Lemma~\ref{lem:sylowbijection},
\[
          r^3+1=N=q^2+1+r(r-1)q,
\]
and therefore
\begin{equation}\label{eq:r-q}
          r^3=q\bigl(q+r(r-1)\bigr).
\end{equation}

The maximal genus bound gives
\[
          \frac{r(r-1)}2=g\le\frac{q(q-1)}2,
\]
hence $r\le q$.  If $r<q$, write $r=p^a$ and $q=p^h$, with $a<h$.
Since $p\nmid r-1$,
\[
          v_p\bigl(q+r(r-1)\bigr)=a.
\]
Taking $p$-adic valuations in \eqref{eq:r-q} gives
$3a=h+a$, hence $h=2a$ and $q=r^2$.  Substitution into
\eqref{eq:r-q} gives
\[
          r^3=r^3(2r-1),
\]
a contradiction.  Therefore $r=q$.  Hence
$g=q(q-1)/2$, and Lemma~\ref{lem:gaps}(i) yields
\[
          X\simeq_{\F_{q^2}}\cH_{q+1},
\]
contrary to our assumption.  This proves the proposition.
\end{proof}

\begin{remark}\label{rem:ree-and-geometric-hermitian}
The final argument forcing $r=q$ is essential.  A curve may be
geometrically Hermitian with parameter $r<q$ and still be maximal over
$\F_{q^2}$.  For example,
\[
          \cH_4:\quad y^4=x^3+x
\]
has genus $3$ and is maximal over
$\F_{729}=\F_{27^2}$: its $\F_9$-Frobenius eigenvalues are all $-3$,
so it has
\[
          729+1+2\cdot3\cdot27=892
\]
$\F_{729}$-rational points.  Its full geometric automorphism group is
$PGU(3,3)$, which has only $3^3+1=28$ Sylow $3$-subgroups.  Thus it
cannot act transitively on those $892$ points.
\end{remark}

\subsection{The cyclic Sylow case}

The cyclic case requires a local ramification argument.  Unlike the
noncyclic case, there is no comparable group-theoretic classification
that by itself rules out cyclic TI Sylow $p$-subgroups; such groups occur
abundantly, for instance in $PSL(2,p)$.  We therefore use the lower
ramification filtration at the unique fixed point.

\begin{lemma}\label{lem:noncentral}
Assume that $X$ is non-Hermitian.  Let $S\in\Syl_p(G)$ be cyclic, let
$P$ be its unique fixed point, and write
\[
          |S|=n=p^a,\qquad
          G_P=S\rtimes C_m.
\]
Then the conjugation action of $C_m$ on $S$ is nontrivial.  If its image
has order $c$, then
\[
          1<c\mid p-1.
\]
\end{lemma}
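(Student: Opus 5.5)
The plan is to argue by contradiction for the first assertion, reducing it to the tame case already settled. The divisibility $c\mid p-1$ will then follow from the structure of $\Aut(S)$.

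\emph{Nontriviality.} Suppose that $C_m$ acts trivially on $S$. By \eqref{eq:stabilizer}, $N_G(S)=G_P=S\times C_m$. Since $S$ is cyclic, hence abelian, this gives
\[
  S\le Z(N_G(S)).
\]
Burnside's normal $p$-complement theorem \cite[Theorem~5.13]{Isaacs} then yields a normal subgroup $K\triangleleft G$ with $p\nmid|K|$ and $G=KS$.

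Every Sylow $p$-subgroup of $G$ is conjugate to $S$ by an element $ks$ with $k\in K$ and $s\in S$. Since $s$ normalizes $S$, this conjugate equals $kSk^{-1}$, so $K$ acts transitively on $\Syl_p(G)$ by conjugation. The $G$-equivariant bijection $\Omega\to\Syl_p(G)$ of Lemma~\ref{lem:sylowbijection} restricts to a $K$-equivariant bijection. Hence $K$ is transitive on $\Omega=X(\F_{q^2})$.

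Thus $K$ is a tame subgroup of $\Aut(X)$ that is transitive on the rational points of the non-Hermitian maximal curve $X$. Proposition~\ref{prop:tamefinal} then forces $q=5$, $g=3$, and identifies $X$ with the $\F_{25}$-maximal Klein quartic. By Proposition~\ref{prop:klein}, its full automorphism group is $PSL(2,7)$, of order $168$, which is prime to $5$. This contradicts the standing wild-case assumption $p\mid|G|$, so the action of $C_m$ on $S$ is nontrivial.

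\emph{Divisibility.} The conjugation action gives a homomorphism $C_m\to\Aut(S)$. We have
\[
  \Aut(S)\cong(\mathbb Z/p^a\mathbb Z)^{\times},
\]
which is cyclic of order $p^{a-1}(p-1)$ because $p$ is odd. The image has order $c$ dividing $m$, and $(m,p)=1$, so $c$ divides the $p'$-part of $|\Aut(S)|$, namely $p-1$. Nontriviality gives $c>1$.

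The main point is the first step: recognizing that a trivial action makes the Sylow subgroup central in its normalizer, so that Burnside's normal $p$-complement theorem produces a tame transitive subgroup. After that, the earlier tame classification closes the argument. The only care needed is checking that transitivity on Sylow subgroups passes to the complement $K$, which uses $G=KS$ with $S\le N_G(S)$.
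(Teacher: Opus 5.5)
Your proof is correct and follows the paper's argument: a trivial action puts $S$ in $Z(N_G(S))$, Burnside's normal $p$-complement theorem gives a tame normal complement $K$ transitive on $\Omega$, and Proposition~\ref{prop:tamefinal} then forces the Klein quartic, contradicting $p\mid|G|$; the divisibility $1<c\mid p-1$ is obtained the same way. The only difference is cosmetic: you get transitivity of $K$ from $G=KS$ acting on $\Syl_p(G)$ through the equivariant bijection, whereas the paper checks $|K\cdot P|=N$ by orbit--stabilizer.
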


\begin{proof}
Suppose, to the contrary, that the conjugation action of $C_m$ on $S$
is trivial.  Since $S$ is cyclic and $N_G(S)=G_P=S\rtimes C_m$, this
implies $S\le Z(N_G(S))$.  Burnside's normal $p$-complement theorem
\cite[Theorem~5.13]{Isaacs} gives a normal $p'$-subgroup $K\triangleleft G$ such that
$G=KS$ and $K\cap S=1$.  Since $S\le G_P$ and the quotient map
$G\to G/K\simeq S$ restricts surjectively to $G_P$, its kernel on $G_P$
is $K\cap G_P$.  Hence
\[
          [G_P:K\cap G_P]=n,
          \qquad |K\cap G_P|=m.
\]
Therefore
\[
          |K\cdot P|
          =\frac{|K|}{|K\cap G_P|}
          =\frac{|G|/n}{m}
          =N.
\]
Since $K\cdot P\subseteq G\cdot P=\Omega$, it follows that
$K\cdot P=\Omega$.  Thus the tame group $K$ is transitive on $\Omega$.
Proposition~\ref{prop:tamefinal} then forces $q=5$ and $X$ to be the
maximal Klein quartic.  But its full geometric automorphism group has
order $168$, which is prime to $5$, contradicting $p\mid|G|$.
Therefore the conjugation action is nontrivial.

Since $|\Aut(C_{p^a})|=p^{a-1}(p-1)$ and $C_m$ has order prime
to $p$, the order $c$ of the conjugation image satisfies
$1<c\mid p-1$.
\end{proof}

\begin{lemma}\label{lem:local}
Assume the notation of Lemma~\ref{lem:noncentral}, and let $c$ be the
order of the conjugation image of $C_m$ on $S$.  Let $j$ be the first
lower ramification jump of $S$, so that
\[
 G_P^{(1)}=\cdots=G_P^{(j)}=S,
 \qquad
 G_P^{(j+1)}<S.
\]
Then
\begin{equation}\label{eq:localrelation}
          \frac{m}{\gcd(m,j)}=c.
\end{equation}
Consequently
\begin{equation}\label{eq:Delta-lower}
          j\ge\frac{m}{c},
          \qquad
          \Delta\ge\frac{m(n-1)}{c},
\end{equation}
where
\[
          \Delta=\sum_{i\ge1}\bigl(|G_P^{(i)}|-1\bigr).
\]
\end{lemma}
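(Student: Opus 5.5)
The plan is to apply the standard local description of the lower ramification filtration at $P$, as in Serre's \emph{Local Fields}, Chapter~IV, Propositions~7 and~9, to the group $G_P=S\rtimes C_m$ at the jump $i=j$. Fix a local parameter $t$ at $P$, and write $k=\overline{\F}_p$ for the residue field. The argument has three steps: determine the graded piece at the jump, compute the action of $C_m$ on it in two different ways, and compare.

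First, I identify the quotient $G_P^{(j)}/G_P^{(j+1)}$. There is the homomorphism
\[
\theta_0:G_P\to k^*,\qquad \sigma\mapsto \sigma(t)/t \bmod t.
\]
Its kernel is $G_P^{(1)}=S$, so it restricts to an injection on $C_m$. If $\gamma$ generates $C_m$, then $\zeta:=\theta_0(\gamma)$ is a primitive $m$-th root of unity. For $i\ge1$ there are injective homomorphisms
\[
\theta_i:G_P^{(i)}/G_P^{(i+1)}\hookrightarrow (k,+),
\]
so each graded piece is elementary abelian. At $i=j$ we have $G_P^{(j)}=S$, which is cyclic, and $S/G_P^{(j+1)}$ is elementary abelian and nontrivial. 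Hence $G_P^{(j+1)}=S^p$, and the quotient $S/S^p$ has order $p$.

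Second, I compare the two descriptions of conjugation by $\gamma$ on this quotient. Serre's Proposition~9 gives
\[
\theta_j(\gamma\tau\gamma^{-1})=\theta_0(\gamma)^j\,\theta_j(\tau)\qquad(\tau\in S).
\]
On the group side, $\gamma$ acts on $S\simeq C_{p^a}$ as $\tau\mapsto\tau^u$ for some unit $u$, so it acts on $S/S^p$ as multiplication by $\bar u\in\F_p^*$. Since the image of $\theta_j$ is a nonzero $\F_p$-line in $k$, comparing the two formulas gives $\zeta^j=\bar u$ in $k^*$. The order of $\zeta^j$ is $m/\gcd(m,j)$. It remains to show that the order of $\bar u$ equals $c$. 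The kernel of the reduction $\Aut(C_{p^a})\to\Aut(C_p)$ has order $p^{a-1}$, while the image of $C_m$ in $\Aut(S)$ has order $c$, which is prime to $p$. So this image maps injectively into $\Aut(S/S^p)$, and $\bar u$ has order exactly $c$. This proves \eqref{eq:localrelation}.

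Third, the consequences follow directly. From \eqref{eq:localrelation}, $m/c=\gcd(m,j)$, and this divides $j\ge1$, so $j\ge m/c$. Since $|G_P^{(i)}|=n$ for $1\le i\le j$,
\[
\Delta\ge\sum_{i=1}^{j}(n-1)=j(n-1)\ge \frac{m(n-1)}{c}.
\]

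The main obstacle is the equivariance formula $\theta_j(\gamma\tau\gamma^{-1})=\theta_0(\gamma)^j\theta_j(\tau)$. I would cite it from Serre rather than rederive it. If a self-contained argument is wanted, I would choose $t$ so that $\gamma(t)=\zeta t$; this is possible because $\langle\gamma\rangle$ is tame and can be diagonalized on the completed local ring. Then I would expand $\tau(t)=t+\alpha t^{j+1}+\cdots$ and compute $\gamma\tau\gamma^{-1}(t)=t+\zeta^{j}\alpha t^{j+1}+\cdots$.
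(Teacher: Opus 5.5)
Your proposal is correct and follows essentially the same route as the paper: you identify $S/G_P^{(j+1)}=S/S^p$ as the order-$p$ graded piece, show that conjugation by a generator of $C_m$ acts on it by the scalar $\zeta^j$, and use the injectivity of $\Aut(C_{p^a})\to\Aut(C_p)$ on the prime-to-$p$ conjugation image to conclude $\operatorname{ord}(\zeta^j)=m/\gcd(m,j)=c$. The only difference is that you cite Serre's formula $\theta_j(\gamma\tau\gamma^{-1})=\theta_0(\gamma)^j\theta_j(\tau)$. The paper instead derives it directly by averaging to a local parameter $t$ with $\tau(t)=\zeta t$, which is exactly your self-contained fallback.
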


\begin{proof}
Let $\tau$ generate $C_m$.  We use the standard structure of the
ramification filtration from
\cite[Proposition~3.8.5]{Stichtenoth}: $G_P^{(1)}$ is the unique
Sylow $p$-subgroup of $G_P$, while
$G_P^{(i)}/G_P^{(i+1)}$ is elementary abelian for $i\ge1$.

The action of $C_m$ on the cotangent line at $P$ is faithful.  Indeed,
an element of $C_m$ acting trivially on that line would belong to
$G_P^{(1)}=S$, and hence would be trivial.  Let $\zeta$ be the
eigenvalue of $\tau$ on the cotangent line; then $\zeta$ has order
$m$.  Starting from any local parameter $u$, set
\[
       t=\sum_{i=0}^{m-1}\zeta^{-i}\tau^i(u).
\]
Modulo the square of the maximal ideal one has
$t\equiv m u$, so $t$ is again a local parameter because $p\nmid m$.
Moreover,
\[
       \tau(t)=\zeta t.
\]

The quotient $S/G_P^{(j+1)}$ has order $p$, because it is both cyclic
and elementary abelian.  Choose
$\sigma\in S\setminus G_P^{(j+1)}$.  Then
\[
          \sigma(t)=t+a t^{j+1}+O(t^{j+2}),
          \qquad a\ne0.
\]
A direct calculation gives
\[
          \tau\sigma\tau^{-1}(t)
          =t+a\zeta^j t^{j+1}+O(t^{j+2}).
\]
The coefficient map identifies
$S/G_P^{(j+1)}$ with a one-dimensional $\F_p$-subspace of the additive
field, and conjugation by $\tau$ preserves this subspace.  Hence
multiplication by $\zeta^j$ restricts to an $\F_p$-linear automorphism
of it (in particular $\zeta^j\in\F_p^*$), so conjugation by $\tau$
acts on $S/G_P^{(j+1)}\simeq C_p$ by the scalar $\zeta^j$.

Since $S$ is cyclic and $S/G_P^{(j+1)}$ has order $p$,
$G_P^{(j+1)}=S^p$.  The reduction map
\[
          \Aut(C_{p^a})\longrightarrow\Aut(C_p)
\]
has $p$-group kernel.  Hence it is injective on the prime-to-$p$
conjugation image of $C_m$, whose order is $c$.  Therefore
\[
          \operatorname{ord}(\zeta^j)
          =\frac{m}{\gcd(m,j)}=c.
\]
The first $j$ positive ramification groups are equal to $S$, so
$\Delta\ge j(n-1)$, which gives \eqref{eq:Delta-lower}.

\end{proof}

\begin{proposition}\label{prop:cyclic}
There is no non-Hermitian maximal curve satisfying the transitivity
hypothesis for which a Sylow $p$-subgroup of $G$ is cyclic.
\end{proposition}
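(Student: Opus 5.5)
The plan is to play the global Riemann--Hurwitz formula for $X\to X/G$ against the local lower bound of Lemma~\ref{lem:local}, and then to close the remaining gap with Riemann--Hurwitz for the smaller covers $X\to X/S$ and $X\to X/G_P$. Throughout, $S$ is a cyclic Sylow $p$-subgroup of order $n=p^a$, $P\in\Omega$ is its fixed point, and $G_P=S\rtimes C_m$. By Lemma~\ref{lem:noncentral} the conjugation image of $C_m$ on $S$ has order $c$ with $1<c\mid p-1$. In particular $m\ge 2$, so $G_P$ is not a $p$-group. Lemma~\ref{lem:local} gives a first jump $j\ge m/c$ with $m/\gcd(m,j)=c$, and hence
\[
\Delta\ge m(n-1)/c .
\]

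\textbf{Step 1: global Riemann--Hurwitz.} The different exponent at each point of $\Omega$ is
\[
d_P=(nm-1)+\Delta .
\]
Since $|G|=Nnm$, Riemann--Hurwitz for $X\to X/G$ with quotient genus $g_0$ gives
\[
2g-2\ \ge\ Nnm(2g_0-2)+N(nm-1+\Delta)+(\text{other orbits}).
\]
If $g_0\ge1$, the right-hand side is at least $N(nm-1)\ge N>2g-2$, which is a contradiction. So $X/G\simeq\PP^1$, and
\[
2g-2\ge N(\Delta-nm-1)+E,
\]
where $E\ge0$ collects the contributions of the remaining short orbits. All of these lie outside $\Omega$, hence are tame by Lemma~\ref{lem:pgroup-fixed}. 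Because $2g-2<N$, this forces
\[
\Delta\le nm
\]
and forces $E$ to be small relative to $N$. Comparing $|G|(1-\sum 1/e_i)$ for the tame branch points with $N(nm-1+\Delta)-2|G|$ then leaves only a few signatures: apart from the wild point there are at most one or two tame branch points, of small index.

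\textbf{Step 2: the cover $X\to X/S$.} By Lemma~\ref{lem:pgroup-fixed}, $S$ is ramified only at $P$. With $g_S=g(X/S)$, Riemann--Hurwitz reads
\[
2g-2=n(2g_S-2)+(n-1)+\Delta .
\]
By Lemma~\ref{lem:quotientmax}, $X/S$ is maximal. Using Hasse--Arf for the cyclic group $S$, the jumps in upper numbering are integers; I would express $\Delta$ through them and combine this with $j\ge m/c$ and $\Delta\le nm$.

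Doing the same for $X\to X/G_P$ and $X\to X/C_m$, and comparing with Step~1, I expect to pin down $g$ in terms of $n,m,j$. I expect $g$ to come out as roughly $(n-1)(j-1)/2$ up to a multiple of $n$. Substituting this into $N=q^2+1+2gq$ together with $|G|=Nnm$ should produce either a non-integral genus or a genus violating Lemma~\ref{lem:gaps}: note $n\le q$ would be needed, while $nm$ must exceed about $N/(g-1)>2q$.

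\textbf{Main obstacle.} The hardest step is Step~1's residual case, where the tame short orbits are not a priori controlled. Here I would first try to show that $X/G_P$ is rational. Then the orbit data of $C_m$ on $X/S$, whose fixed points are tame and bounded by $2g_S+2$, pins down $E$, giving an equation of the form
\[
(2g-2)+2Nnm=N(nm-1+\Delta)+E .
\]
If that fails, the fallback is a $p$-adic and congruence argument on $N\equiv1\pmod p$ using $c\mid p-1$ and $c=m/\gcd(m,j)$ to exclude the remaining integer solutions.
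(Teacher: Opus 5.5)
Your Step~1 is correct as far as it goes; the direct argument that $X/G\simeq\PP^1$ even avoids the external short-orbit classification. But the proposal never reaches a contradiction. Everything after Step~1 is stated as expectation (``I expect $g$ to come out as\dots'', ``should produce\dots'', a ``fallback''). The route through $X\to X/S$ cannot close by itself, because $2g-2=n(2g_S-2)+(n-1)+\Delta$ contains the free parameter $g_S$, and maximality of $X/S$ does not determine it.

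More seriously, the picture you draw from Step~1 is backwards. Since $\Delta\le nm$, your identity $2g-2=N(\Delta-nm-1)+E$ gives $E\ge N+(2g-2)$, so $E$ is not small relative to $N$. What is true comes from dividing by $|G|=Nnm$ and using $0<2g-2<N$ and $\Delta\ge n-1\ge 2$:
\[
\frac{E}{|G|}=1+\frac{1}{nm}\Bigl(\frac{2g-2}{N}+1-\Delta\Bigr)<1 .
\]
Each tame short orbit contributes at least $|G|/2$ to $E$. Having no tame short orbit would make $2g-2$ a multiple of $N$. So there is exactly one tame short orbit, say with stabilizer order $e$, and
\[
\frac{2g-2}{N}=\Delta-1-\frac{nm}{e}\in(0,1).
\]
Thus the tame orbits are completely controlled, and the surviving index is large, not small. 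Writing $nm=\alpha e+\rho$ with $1\le\rho<e$, one gets $e(2g-2)=N(e-\rho)$, hence $e\ge N/(2g-2)>q$.

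This is the missing idea. It makes $\Delta=\lceil nm/e\rceil+1<nm/e+2$ small, whereas Lemma~\ref{lem:local} together with $c\mid p-1$ gives $\Delta\ge m(n-1)/c$. Combining the two,
\[
m\Bigl(\frac{n-1}{c}-\frac{n}{e}\Bigr)<2 .
\]
For $q>p$ this forces $m=c=2$, then $p=n=3$ and $\Delta=2$, which contradicts $(2g-2)/N<1/q$. For $q=p$ it forces $n=p$ and $m\le\Delta=j(p-1)\le m+1$. The relation $m/\gcd(m,j)=c$ rules out $\Delta=m+1$, and when $\Delta=m$ it gives $c=p-1$. Writing $e=p+k$, the condition $0<(2g-2)/N=(k(m-1)-p)/(p+k)<1/p$ then leaves only $p=5$, $m=4$, $e=7$. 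That case is excluded by the bound $g\le4$ for non-Hermitian $\F_{25}$-maximal curves.

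Neither this inequality nor the case analysis appears in your plan. The tools you propose instead (Hasse--Arf for $S$, the covers $X\to X/G_P$ and $X\to X/C_m$, congruences from $N\equiv1\pmod p$) do not supply the upper bound on $\Delta$ that closes the argument.
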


\begin{proof}
Assume that $X$ is non-Hermitian and that
\[
          S\in\Syl_p(G)
\]
is cyclic.  Let $P$ be its unique fixed point and write
\[
          |S|=n=p^a,\qquad
          G_P=S\rtimes C_m,\qquad
          s:=|G_P|=nm.
\]
By Lemma~\ref{lem:noncentral}, the conjugation image of $C_m$ on $S$
has order $c$ with
\[
          1<c\mid p-1.
\]
In particular $m\ge2$ and $s\ge2p$.

Since $X$ is not Hermitian, the same monotonicity calculation as in
\eqref{eq:ratio-general} gives
\[
 \frac{|G|}{g-1}
 =s\frac{N}{g-1}
 \ge
 2p\left(2q+4+\frac{16}{q-3}\right)>84.
\]
When $q=p=3$ the non-Hermitian genus bound already gives $g\le1$;
otherwise the displayed expression is defined and the inequality is
immediate.

We now apply the short-orbit classification
\cite[Theorem~2.4]{BMT}.  Since $|G|>84(g-1)$, the quotient $X/G$ is
rational and one of the following four configurations occurs: three
short orbits, one non-tame and two tame with stabilizer order $2$; two
non-tame short orbits; one non-tame short orbit; or two short orbits,
one non-tame and one tame. In particular, all Riemann--Hurwitz formulas
below are written with $g(X/G)=0$. We do not use the auxiliary bound
$|G|<8g^3$ occurring in the last case.

The only possible non-tame short orbit is $\Omega$.  Indeed, if
$p\mid|G_Q|$, choose a subgroup $A\le G_Q$ of order $p$ and a Sylow
$p$-subgroup $S'$ of $G$ containing $A$.  By
Lemma~\ref{lem:pgroup-fixed}, both $A$ and $S'$ have unique fixed
points, and the fixed point of $S'$ belongs to $\Omega$.  Since that
point is also fixed by $A$, it must be $Q$.  Hence $Q\in\Omega$.

Thus the case of two non-tame short orbits is impossible.  If
$\Omega$ were the only short orbit, then, using $|G|=sN$, the
Riemann--Hurwitz formula would give
\[
 \begin{aligned}
 2g-2
   &=-2|G|+N d_P\\
   &=N(d_P-2s),
 \end{aligned}
\]
which is impossible because the right-hand side is an integral
multiple of $N$, whereas $0<2g-2<N$.  If there were three short
orbits, case~\textup{(1)} of \cite[Theorem~2.4]{BMT} (recall that $p$
is odd) says that the other two are tame and that every point in them
has stabilizer of order $2$.  Thus each contributes $|G|/2$ to the
different, and hence
\[
 \begin{aligned}
 2g-2
   &=-2|G|+N d_P+|G|\\
   &=N(d_P-s),
 \end{aligned}
\]
again impossible because the right-hand side is an integral multiple
of $N$.  Therefore there are exactly two short orbits:
$\Omega$ and one tame short orbit.

Let $Q$ lie in the tame short orbit and put
\[
          e=|G_Q|.
\]
Then $p\nmid e$, $G_Q$ is cyclic, and the tame different exponent at
$Q$ is $d_Q=e-1$.  At $P$ put
\[
          \Delta=\sum_{i\ge1}\bigl(|G_P^{(i)}|-1\bigr).
\]
Hilbert's different formula gives
\[
          d_P=(|G_P|-1)+\Delta=(s-1)+\Delta.
\]
Since the tame orbit has length $|G|/e=sN/e$, Riemann--Hurwitz gives
\[
\begin{aligned}
 2g-2
   &=-2|G|+N d_P+\frac{|G|}{e}(e-1)\\
   &=-2sN+N\bigl((s-1)+\Delta\bigr)
      +\frac{sN}{e}(e-1)\\
   &=N\left(\Delta-1-\frac{s}{e}\right).
\end{aligned}
\]
Thus
\begin{equation}\label{eq:globalRH}
 \frac{2g-2}{N}
 =
 \Delta-1-\frac{s}{e}.
\end{equation}
The left-hand side belongs to $(0,1)$.  Therefore $e\nmid s$ and
\begin{equation}\label{eq:Delta}
          \Delta=\left\lceil\frac{s}{e}\right\rceil+1.
\end{equation}
Writing
\[
          s=\alpha e+\rho,
          \qquad 1\le\rho\le e-1,
\]
we obtain
\[
          e(2g-2)=N(e-\rho).
\]
In particular,
\begin{equation}\label{eq:e-lower}
          e\ge\frac{N}{2g-2}
          =q+\frac{(q+1)^2}{2g-2}>q.
\end{equation}
We shall also use
\begin{equation}\label{eq:ratio-less-1q}
          \frac{2g-2}{N}<\frac1q,
\end{equation}
which is equivalent to $q(2g-2)<N$ and follows immediately from
$N=q^2+1+2gq$.

Combining \eqref{eq:Delta} and \eqref{eq:Delta-lower}, and using
$s=nm$, we have
\[
 \frac{m(n-1)}{c}
 \le \Delta
 =\left\lceil\frac{nm}{e}\right\rceil+1
 <\frac{nm}{e}+2,
\]
where the last inequality is $\lceil x\rceil<x+1$ for nonintegral
$x$; here $e\nmid s$.  Hence
\begin{equation}\label{eq:master}
 m\left(\frac{n-1}{c}-\frac{n}{e}\right)<2.
\end{equation}

We distinguish the cases $q>p$ and $q=p$.

\medskip
\noindent\emph{Case 1: $q>p$.}
Then $q\ge p^2$.  Since $c\le p-1$, $e>q$, and $n\ge p$,
\[
 \frac{n-1}{c}-\frac{n}{e}
 >
 \frac{n-1}{p-1}-\frac{n}{q}.
\]
The function
\[
          f(x)=\frac{x-1}{p-1}-\frac{x}{q}
\]
is increasing because
$f'(x)=1/(p-1)-1/q>0$.  Hence, for $n\ge p$,
\[
 \frac{n-1}{p-1}-\frac{n}{q}
 \ge f(p)=1-\frac pq\ge1-\frac1p.
\]
Equation \eqref{eq:master} yields
\[
          m<\frac{2p}{p-1}.
\]
Since $c>1$ divides $m$, necessarily
\[
          m=c=2.
\]
Now \eqref{eq:Delta-lower} and \eqref{eq:Delta} imply
\[
          1-\frac3n<\frac2e<\frac2q.
\]
For $p\ge5$ this is impossible because $n\ge5$ and $q\ge25$.
For $p=3$ it is impossible if $n\ge9$.  Thus the only remaining
possibility is $p=3$, $n=3$.  Then $s=6$, and $e>q\ge9$ gives
$\Delta=2$.  Equation \eqref{eq:globalRH} becomes
\[
          \frac{2g-2}{N}=1-\frac6e>\frac13,
\]
whereas \eqref{eq:ratio-less-1q} gives
\[
          \frac{2g-2}{N}<\frac1q\le\frac19,
\]
a contradiction.

\medskip
\noindent\emph{Case 2: $q=p$.}
The non-Hermitian genus bound forces $p\ge5$.  From
$c\le p-1$ and $e>p$, equation \eqref{eq:master} gives
\[
          m\,\frac{n-p}{p(p-1)}<2.
\]
If $n\ge p^2$, then the left-hand side is at least $m\ge2$, a
contradiction.  Hence
\[
          n=p.
\]
There is now only one positive ramification jump, and
\[
          \Delta=j(p-1).
\]
Since $e>p$, the lower bound \eqref{eq:Delta-lower} and
\eqref{eq:Delta} give
\[
          m\le\Delta\le m+1.
\]

If $\Delta=m+1$, then
\[
          j(p-1)=m+1.
\]
Hence $\gcd(m,j)=1$, so \eqref{eq:localrelation} gives $c=m$.
Since $c=m$ and $c\mid p-1$, we have $m\le p-1$.  Thus
$j=1$ and $m=p-2$.  But then $p-2\mid p-1$, which forces $p=3$.
This is impossible because $p\ge5$.

Thus $\Delta=m$, so
\[
          m=j(p-1).
\]
Hence $\gcd(m,j)=j$, and \eqref{eq:localrelation} gives $c=p-1$.
Write $e=p+k$, $k\ge1$.  From \eqref{eq:globalRH} and \eqref{eq:ratio-less-1q},
\[
          0<
          \frac{2g-2}{N}
          =
          \frac{k(m-1)-p}{p+k}
          <\frac1p.
\]
Therefore
\begin{equation}\label{eq:kineq}
          k\bigl(p(m-1)-1\bigr)<p(p+1).
\end{equation}
For $p\ge7$, \eqref{eq:kineq} forces $k=1$: if $k\ge2$, then
$m\ge p-1$ gives
\[
  k\bigl(p(m-1)-1\bigr)
  \ge2\bigl(p(p-2)-1\bigr)>p(p+1).
\]
With $k=1$, positivity gives $m>p+1$.  Since $p-1\mid m$, it follows
that $m\ge2(p-1)$, whereas \eqref{eq:kineq} gives $m\le p+2$.
This is a contradiction.

It remains to consider $p=5$.  Equation \eqref{eq:kineq}, together
with $4\mid m$ and positivity, leaves only
\[
          m=4,\qquad k=2,\qquad e=7.
\]
Then
\[
          \frac{2g-2}{N}=\frac17.
\]
But a non-Hermitian maximal curve over $\F_{25}$ has $g\le4$.  Since
\[
          \frac{2g-2}{N}
          =\frac{2g-2}{26+10g}
\]
is increasing for $g\ge2$, it follows that
\[
          \frac{2g-2}{N}
          \le\frac{6}{66}
          =\frac1{11},
\]
a contradiction.

Thus the cyclic Sylow case is impossible for every odd $q$.
\end{proof}

\begin{proposition}\label{prop:wild}
Let $q=p^h$ be odd and let $X/\F_{q^2}$ be maximal of genus
$g\ge2$.  If $\Aut(X)$ is transitive on $X(\F_{q^2})$ and
$p\mid|\Aut(X)|$, then
$X\simeq_{\F_{q^2}}\cH_{q+1}$.
\end{proposition}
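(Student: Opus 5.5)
The proof is a case split on the structure of a Sylow $p$-subgroup of $G=\Aut(X)$, assembling the two preceding propositions. Assume $G$ is transitive on $\Omega=X(\F_{q^2})$ and $p\mid|G|$. Since $N\equiv1\pmod p$, we have $p\mid|G_P|$ for every $P\in\Omega$. Lemma~\ref{lem:sylowbijection} then applies: the Sylow $p$-subgroups of $G$ are nontrivial, all conjugate, TI, and in $G$-equivariant bijection with $\Omega$. In particular, either every Sylow $p$-subgroup of $G$ is cyclic, or none is.

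Suppose, for contradiction, that $X$ is not $\F_{q^2}$-isomorphic to $\cH_{q+1}$. If a Sylow $p$-subgroup is noncyclic, Proposition~\ref{prop:noncyclic} gives $X\simeq_{\F_{q^2}}\cH_{q+1}$ directly. If a Sylow $p$-subgroup is cyclic, Proposition~\ref{prop:cyclic} rules out every non-Hermitian maximal curve satisfying the transitivity hypothesis. The hypotheses of Proposition~\ref{prop:cyclic} are exactly ours, namely $G=\Aut(X)$ transitive on $\Omega$ with $p\mid|G|$. Since $p$ is odd, the generalized-quaternion exclusion in Theorem~\ref{thm:GMP-TI} is irrelevant, and the cyclic/noncyclic dichotomy is exhaustive. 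Both branches therefore yield $X\simeq_{\F_{q^2}}\cH_{q+1}$.

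There is no real obstacle left at this stage. The only point to verify is that the standing assumptions of the wild section, $G=\Aut(X)$ transitive on $\Omega$ with $p\mid|G|$, are precisely those under which Propositions~\ref{prop:noncyclic} and~\ref{prop:cyclic} were proved. The Klein-quartic exception cannot arise here, since its automorphism group has order prime to $5$. This is already used inside Lemma~\ref{lem:noncentral}, so nothing further is needed.
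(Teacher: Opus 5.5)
Your proposal is correct and matches the paper's argument. You split according to whether a Sylow $p$-subgroup of $\Aut(X)$ is cyclic, then invoke Proposition~\ref{prop:noncyclic} in the noncyclic case and Proposition~\ref{prop:cyclic} in the cyclic case; your side remarks (TI property, the irrelevance of the quaternion exclusion for odd $p$, and the Klein quartic being handled inside Lemma~\ref{lem:noncentral}) are accurate but not needed.
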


\begin{proof}
Let $G=\Aut(X)$ and let $S\in\Syl_p(G)$.  If $S$ is noncyclic,
Proposition~\ref{prop:noncyclic} gives
$X\simeq_{\F_{q^2}}\cH_{q+1}$.  If $S$ is cyclic and $X$ were
non-Hermitian, Proposition~\ref{prop:cyclic} would give a contradiction.
Hence $X\simeq_{\F_{q^2}}\cH_{q+1}$ also in this case.
\end{proof}

\section{Proof of the main theorem}

\begin{proof}[Proof of Theorem~\ref{thm:main}]
The Hermitian curve has automorphism group $PGU(3,q)$, which acts
doubly transitively on its $q^3+1$ rational points.  The $\F_{25}$-maximal Klein quartic is transitive by
Proposition~\ref{prop:klein}.

Conversely, let $G=\Aut(X)$ be transitive on $X(\F_{q^2})$.
If $p\mid|G|$, Proposition~\ref{prop:wild} gives the Hermitian curve.
Assume $p\nmid|G|$.  Then $X$ cannot be Hermitian, since
$PGU(3,q)$ has order divisible by $p$.  Proposition~\ref{prop:tamefinal}
therefore gives $q=5$, $g=3$, and the maximal Klein quartic.
\end{proof}

\section*{Acknowledgments}
OpenAI's ChatGPT was used to assist with language editing and the
preparation of the final version of the manuscript. All mathematical
arguments and references were checked by the author, who takes full
responsibility for the content.

\section*{Statements and Declarations}
\textbf{Funding.}
The author was partially supported by CNPq grant no.~302774/2025-4,
FAPESP grant no.~2024/00923-6, and FAEPEX grant no.~3485/25.

\textbf{Competing interests.}
The author has no relevant financial or non-financial interests to
disclose.


\begin{thebibliography}{99}

\bibitem{BMT}
D.~Bartoli, M.~Montanucci and F.~Torres,
\emph{$\F_{p^2}$-maximal curves with many automorphisms are
Galois-covered by the Hermitian curve},
Adv. Geom. \textbf{21} (2021), 325--336.

\bibitem{BTT}
S.~Bootsma, S.~Tafazolian and J.~Top,
\emph{Maximal curves and Tate--Shafarevich results for quartic and
sextic twists},
Finite Fields Appl. \textbf{91} (2023), 102256.

\bibitem{ElkiesKlein}
N.~D.~Elkies,
\emph{The Klein quartic in number theory},
in S.~Levy (ed.), \emph{The Eightfold Way: The Beauty of Klein's
Quartic Curve}, Math. Sci. Res. Inst. Publ. \textbf{35},
Cambridge University Press, Cambridge, 1999, 51--102.

\bibitem{Breuer}
T.~Breuer,
\emph{Characters and Automorphism Groups of Compact Riemann Surfaces},
London Math. Soc. Lecture Note Ser. \textbf{280},
Cambridge University Press, Cambridge, 2000.

\bibitem{ConderMax}
M.~D.~E.~Conder,
\emph{Maximum group orders by genus}, online data table,
\url{https://www.math.auckland.ac.nz/~conder/MaximumGroupOrdersByGenus-orientable.txt}.

\bibitem{FGT}
R.~Fuhrmann, A.~Garcia and F.~Torres,
\emph{On maximal curves},
J. Number Theory \textbf{67} (1997), no.~1, 29--51.

\bibitem{GK2010}
M.~Giulietti and G.~Korchm\'aros,
\emph{Automorphism groups of algebraic curves with $p$-rank zero},
J. Lond. Math. Soc. (2) \textbf{81} (2010), 277--296.

\bibitem{GK2019}
M.~Giulietti and G.~Korchm\'aros,
\emph{Algebraic curves with many automorphisms},
Adv. Math. \textbf{349} (2019), 162--211.

\bibitem{GKMSurvey}
M.~Giulietti, G.~Korchm\'aros and M.~Montanucci,
\emph{Maximal curves over finite fields, past, present and future},
Panoramas et Synth\`eses \textbf{60} (2023), 37--66.

\bibitem{GSY}
B.~Gunby, A.~Smith and A.~Yuan,
\emph{Irreducible canonical representations in positive characteristic},
Res. Number Theory \textbf{1} (2015), Art.~3, 25~pp.

\bibitem{GMP}
R.~Guralnick, B.~Malmskog and R.~Pries,
\emph{The automorphism groups of a family of maximal curves},
J. Algebra \textbf{361} (2012), 92--106.

\bibitem{HKT}
J.~W.~P.~Hirschfeld, G.~Korchm\'aros and F.~Torres,
\emph{Algebraic Curves over a Finite Field},
Princeton Series in Applied Mathematics,
Princeton University Press, Princeton, 2008.

\bibitem{Roquette}
P.~Roquette,
\emph{Absch\"atzung der Automorphismenanzahl von Funktionenk\"orpern bei Primzahlcharakteristik},
Math. Z. \textbf{117} (1970), 157--163.

\bibitem{Isaacs}
I.~M.~Isaacs,
\emph{Finite Group Theory},
Graduate Studies in Mathematics \textbf{92},
American Mathematical Society, Providence, RI, 2008.

\bibitem{KT}
G.~Korchm\'aros and F.~Torres,
\emph{On the genus of a maximal curve},
Math. Ann. \textbf{323} (2002), 589--608.

\bibitem{Lachaud}
G.~Lachaud,
\emph{Sommes d'Eisenstein et nombre de points de certaines courbes
alg\'ebriques sur les corps finis},
C. R. Acad. Sci. Paris S\'er. I Math. \textbf{305} (1987), no.~16,
729--732.

\bibitem{LiOort}
K.-Z.~Li and F.~Oort,
\emph{Moduli of Supersingular Abelian Varieties},
Lecture Notes in Mathematics \textbf{1680}, Springer, Berlin, 1998.

\bibitem{SGA1}
A.~Grothendieck,
\emph{Rev\^etements \'etales et groupe fondamental (SGA~1)},
Lecture Notes in Mathematics \textbf{224}, Springer, Berlin, 1971.

\bibitem{Stichtenoth}
H.~Stichtenoth,
\emph{Algebraic Function Fields and Codes},
2nd ed., Graduate Texts in Mathematics \textbf{254},
Springer, Berlin, 2009.

\bibitem{Yui}
N.~Yui,
\emph{On the Jacobian varieties of hyperelliptic curves over fields of
characteristic $p>2$},
J. Algebra \textbf{52} (1978), no.~2, 378--410.

\end{thebibliography}
\end{document}